\theoremstyle{plain}
\newtheorem{theorem}{Theorem}[section]
\newtheorem{lemma}[theorem]{Lemma}
\newtheorem{claim}[theorem]{Claim}
\theoremstyle{definition}
\theoremstyle{remark}
\newtheorem{remark}[theorem]{Remark}
\theoremstyle{plain}
\numberwithin{equation}{section}
\let\oldtocsection=\tocsection
\let\oldtocsubsection=\tocsubsection
\let\oldtocsubsubsection=\tocsubsubsection
\renewcommand{\tocsection}[2]{\hspace{0em}\oldtocsection{#1}{#2}}
\renewcommand{\tocsubsection}[2]{\hspace{2em}\oldtocsubsection{#1}{#2}}
\renewcommand{\tocsubsubsection}[2]{\hspace{4.5em}\oldtocsubsubsection{#1}{#2}}
\title{Centered Moments of Weighted One-Level Densities of $GL(2)$ $L$-Functions}
\author{Lawrence Dillon}
\address{Department of Mathematics, University of Washington, Seattle, WA 98105}
\email{\href{mailto:}{lvid@uw.edu}}
\author{Xiaoyao Huang}
\address{Department of Mathematics, University of Michigan, Ann Arbor, MI 48109}
\email{\href{mailto:}{xyrushac@umich.edu}}
\author{Say-Yeon Kwon}
\address{Department of Mathematics, Princeton University, Princeton, NJ 08544}
\email{\href{mailto:sk9017@princeton.edu}{sk9017@princeton.edu}}
\author{Meiling Laurence}
\address{Department of Mathematics, Yale University, New Haven, CT 06520}
\email{\href{mailto:}{meiling.laurence@yale.edu}}
\author{Steven J. Miller}
\address{Department of Mathematics \& Statistics, Williams College, Williamstown, MA 01267}
\email{\href{mailto:sjm1@williams.edu}{sjm1@williams.edu}}
\author{Vishal Muthuvel}
\address{Department of Mathematics, Columbia University, New York, NY 10027}
\email{\href{mailto:vm2696@columbia.edu}{vm2696@columbia.edu}}
\author{Luke Rowen}
\address{Department of Mathematics, Carleton College, Northfield, MN 55057 }
\email{\href{mailto:rowenl@carleton.edu}{rowenl@carleton.edu}}
\author{Pramana Saldin}
\address{Department of Mathematics, University of Wisconsin, Madison, WI 53706}
\email{\href{mailto:saldin@wisc.edu}{saldin@wisc.edu}}
\author{Steven Zanetti}
\address{Department of Mathematics, University of Michigan, Ann Arbor, MI 48109}
\email{\href{mailto:szanetti@umich.edu}{szanetti@umich.edu}}
\date{\today}
\thanks{This study took place at the SMALL REU at Williams College and was supported by Williams College and the National Science Foundation (Grant DMS2241623). The authors are also grateful for the support of Columbia University, Princeton University, University of Michigan, University of Washington, University of Wisconsin, Williams College, and Yale University. We thank Andrew Knightly for suggesting improvements to the paper, and also our journal referee for detailed and helpful comments.}
\begin{document}
\begingroup

%
%

\begin{abstract}

Katz and Sarnak conjectured that the behavior of zeros near the central point of any family of $L$-functions is well-modeled by the behavior of eigenvalues near $1$ of some classical compact group (either the symplectic, unitary, or even, odd, or full orthogonal group). In 2018, Knightly and Reno proved that the symmetry group can vary depending on how the $L$-functions in the family are weighted. They observed both orthogonal and symplectic symmetry in the one-level densities of families of cuspidal newform $L$-functions for different choices of weights. We observe the same dependence of symmetry on weights in the $n$\textsuperscript{th} centered moments of these one-level densities, for smooth test functions whose Fourier transforms are supported in $\left(-\frac{1}{2n}, \frac{1}{2n}\right)$. To treat the new terms that emerge in our $n$-level calculations when $n>1$, i.e., the cross terms that emerge from $n$-fold products of primes rather than individual primes, we generalize Knightly and Reno's weighted trace formula from primes to arbitrary positive integers. We then perform a delicate analysis of these cross terms to distinguish their contributions to the main and error terms of the $n$\textsuperscript{th} centered moments. The final novelty here is an elementary combinatorial trick that we use to rewrite the main number theoretic terms arising from our analysis, facilitating comparisons with random matrix theory. 

\end{abstract}

\maketitle
\vspace{-1cm}
\tableofcontents

%
%

\section{Introduction}\label{sec:1}

Many studies have been undertaken over the last few decades to make precise the connection between number theory and random matrix theory. Before situating our work in this chronology, we recount the most relevant results and how they prompted the refinement of this connection. 

In 1972, Montgomery \cite{Mon73} formalized the connection between number theory and random matrix theory, proving that, for suitable test functions, the pair correlation of the zeros of the Riemann zeta function $\zeta(s)$ agrees with the pair correlation of the eigenvalues of the Gaussian Unitary Ensemble (GUE). In his seminal work, he furthermore conjectured a correspondence between any local statistic of the zeros of $\zeta(s)$ and the eigenvalues of the GUE. In 1987, Odlyzko \cite{Odl87, Odl01} provided numerical evidence for this conjecture, verifying the correspondence for particular local statistics: the pair correlation and the nearest neighbor spacing distribution. Subsequent works by Hejhal \cite{Hej94} on the triple correlation of $\zeta(s)$ and Rudnick and Sarnak \cite{RS96} on the $n$-level correlation of any automorphic cuspidal $L$-function, all for suitably restricted test functions, provided strong evidence for Montgomery's conjecture in generality. These studies suggested a remarkable universality in number theory: the statistical profile of any $L$-function matches that of one and only one of the numerous random matrix ensembles, the GUE.

At the time, however, there were a few reasons to believe that this conjecture did not capture the connection in full. For one, certain local statistics, such as the $n$-level correlation, are insensitive to the behavior of any finite set of zeros. As there are many important problems in number theory concerning only finite sets of zeros (e.g., the Birch and Swinnerton-Dyer conjecture \cite{BS63, BS65} is concerned only with the low-lying zeros of elliptic curve $L$-functions), this marked a considerable shortcoming of the conjecture on the number theory side. Katz and Sarnak \cite{KS99a, KS99b} confirmed that indeed more attention is needed in this regard. They showed that the $n$-level correlation of eigenvalues is the same coming from the GUE and all five classical compact groups (unitary, symplectic, and orthogonal, split and unsplit by sign). They addressed this possibility of confounding by defining a new statistic called the one-level density, which is not only distinguishable across the classical compact groups but also sensitive to changes in low-lying zeros (near the central point). For any $L$-function $L(s,f)$, denoting its non-trivial zeros by $\rho_f=1/2+i\gamma_f$\footnote{If the Grand Riemann Hypothesis (GRH) is true, then $\gamma$ would be real, and the non-trivial zeros $\rho$ could be ordered accordingly. There are interesting interpretations of this ordering in the context of random matrix theory and nuclear physics, actuating connections to the eigenvalues of classical compact groups and the energy levels of heavy nuclei. With this being said, we do not assume the truth of the GRH in what follows, performing our study in generality.}, they defined its one-level density by 
\begin{equation}\label{eq:1-level-density}
   D(f,\phi) \ := \ \sum_{\rho_f}\phi\left(\frac{\gamma_f\log Q_f}{2\pi}\right), 
\end{equation}
where $\phi$ is an even Schwartz (test) function, and $Q_f$ is the analytic conductor of $f$. The low-lying zeros of $L(s,f)$ have imaginary part approximately $1/\log Q_f$ and the average spacing between zeros with imaginary part $T$ is known to be approximately $1/\log T$. Hence, the scaled zeros, $\frac{\gamma_f\log Q_f}{2\pi}$, have average spacing $1$, meaning there are only finitely many up to any real number $C > 0$. As it is not instructive to statistically survey the finitely many low-lying zeros of a single $L$-function, Katz and Sarnak passed to the study of collections of naturally related (similarly behaved) $L$-functions— ``families'' so to speak. In doing so, they were able to execute averages and take limits, as is customary in analytic number theory, to identify the common behavior underlying the $L$-functions in the family.   

To facilitate our discussion, we establish a standard notation for averages, writing the average of a map $Q$ over a finite collection $\mathcal{C}$ with weight $w$ as 
\begin{equation}\label{eq:arbitrary-average}
\mathcal{A}^w_\mathcal{C}(Q) \ := \ \frac{\sum_{c\in\mathcal{C}}w(c)Q(c)}{\sum_{c\in\mathcal{C}}w(c)}.
\end{equation}
For unweighted averages, i.e., when $w$ is identically $1$ on $\mathcal{C}$, we omit the superscript in \eqref{eq:arbitrary-average}.

In refining Montgomery's conjecture based on their seminal study, Katz and Sarnak formulated the now celebrated density conjecture, stating that the behavior of zeros near $1/2$ in any family of $L$-functions matches the behavior of eigenvalues near $1$ in one of the five classical compact groups. More precisely, consider a family $\mathcal{F}=\bigcup_k \mathcal{F}_k$ of $L$-functions, where each sub-family $\mathcal{F}_k$ is finite. Let the average one-level density of each sub-family be given by
\begin{equation}\label{eq:family-density}
    \mathcal{A}_{\mathcal{F}_k}(D(\cdot,\phi)) \ := \ \frac{\sum_{L(s,f)\in\mathcal{F}_k}D(f,\phi)}{|\mathcal{F}_k|}.
\end{equation}
Then, there exists a symmetry group $G$ among the classical compact groups O, SO(even), SO(odd), Sp, and U such that for any test function $\phi$ with compactly supported Fourier transform $\widehat{\phi}$, 
\begin{equation}\label{eq:density-conjecture}
\lim_{k\to\infty} \mathcal{A}_{\mathcal{F}_k}(D(\cdot,\phi)) \ = \ \int_{-\infty}^\infty\phi(x)W_G(x)dx.
\end{equation}
Here, $W_G(x)$ is the limiting distribution of the one-level density of the eigenvalues coming from the random matrices in $G$ as rank tends to $\infty$. These distributions are given by
\begin{align}
    W_\text{U}(x) \ &= \ 1, \\
    W_\text{Sp}(x) \ &= \ 1-\frac{\sin(2\pi x)}{2\pi x}, \\
    W_\text{SO(even)}(x) \ &= \ 1+\frac{\sin(2\pi x)}{2\pi x}, \\
    W_\text{SO(odd)}(x) \ &= \ \delta_0+1-\frac{\sin(2\pi x)}{2\pi x}, \\
    W_\text{O}(x) \ &= \ 1+\frac{1}{2}\delta_0(x),
\end{align} 
where $\delta_0$ is the Dirac distribution at $0$. When $\widehat{\phi}$ is supported in $(-1,1)$, which will be the case throughout our study, $W_\text{O}(x)$ and $W_\text{Sp}(x)$, and $W_\text{SO(even)}$ and $W_\text{SO(odd)}$ coincide as distributions. This can be realized through Plancherel's theorem \cite[(1.34)]{ILS00}. 

There is now an enormous body of work showing that the one-level densities of various families of $L$-functions (coming from Dirichlet characters, elliptic curves, cuspidal newforms, Maass forms, number fields, etc.) agree with the scaled limit of one of the five classical compact groups; for examples, see \cite{Alp+15,Bar+17,BCL24,DM06,FM15,GK12,ILS00,KR18,RR07,You06}. With this being said, the one-level density is not the only local statistic that provides information about low-lying zeros. Katz and Sarnak defined a higher-order analog for the one-level density called the $n$-level density. For any $L$-function $L(s,f)$, its $n$-level density entails $n$-tuples of its zeros $\rho_f^{(1)},\dots,\rho_f^{(n)}$ and $n$ test functions $\phi_1,\dots,\phi_n$:
\begin{equation}\label{eq:n-level-density}
    D_n(f;\phi_1,\dots,\phi_n) \ := \ \sum_{\substack{\rho_f^{(1)},\dots,\rho_f^{(n)} \\ \gamma_f^{(i)}\neq\pm\gamma_f^{(j)}}}\phi_1\left(\frac{\gamma_f^{(1)}\log Q_f}{2\pi}\right)\cdots\phi_n\left(\frac{\gamma_f^{(n)}\log Q_f}{2\pi}\right).
\end{equation} 
The $n$-level density of a family is usually calculated in terms of its one-level density using the principle of inclusion-exclusion \cite{Rub01, Gao08}. This approach, however, relies on our knowledge of the distribution of the signs of the functional equations in the family, which is not known in general.\footnote{For the family of weight $k$, level $N$ cuspidal newforms that we study in this paper, the distribution of the signs of the functional equations has been formulated by Martin \cite[Theorem 1.2]{Mar23}. Using this, it would be interesting to consider extending our results here from $n$\textsuperscript{th} centered moments of the one-level density to $n$-level densities.} In this view, Hughes and Rudnick \cite{HR02} initiated the study of a more tractable $n$-level statistic: the $n$\textsuperscript{th} centered moment of the one-level density. More precisely, for the general sub-family $\mathcal{F}_k$ of $L$-functions given above, the $n$\textsuperscript{th} centered moment of its one-level density is defined as
\begin{align}\label{eq:centered-moment}
    \mathcal{A}_{\mathcal{F}_k}\left[\left(D(\cdot,\phi)-\mathcal{A}_{\mathcal{F}_k}(D(\cdot,\phi))\right)^n\right] \ := \ \frac{\sum_{L(s,f)\in\mathcal{F}_k}\left(D(f,\phi)-\mathcal{A}_{\mathcal{F}_k}(D(\cdot,\phi))\right)^n}{|\mathcal{F}_k|}.
\end{align}
The $n$\textsuperscript{th} centered moment of the one-level density \eqref{eq:centered-moment} is equivalent to the $n$-level density (\ref{eq:n-level-density}) when $\phi_1=\cdots=\phi_n$, explaining why these statistics encode the same useful arithmetic information, e.g., the order of vanishing at the central point. There are cases \cite{Che+25} in which taking different $\phi_1,\dots,\phi_n$ has proved more productive than taking $\phi_1=\cdots=\phi_n$ in calculating the $n$-level density, cases in which its equivalence to the $n$\textsuperscript{th} centered moment of the one-level density is broken. 

The general approach in calculating these density statistics for families of $L$-functions is to first find an explicit formula converting the sum over zeros in \eqref{eq:1-level-density} to a sum over primes, then an asymptotic (trace) formula for the resulting sum over primes. Finding the explicit formula is usually straightforward, e.g., \cite[Lemma 4.1]{ILS00}, but finding the trace formula usually entails intricate equidistribution laws governing the family. In the specific case of families of automorphic forms, trace formulae naturally involve weights. For example, harmonic weights naturally arise in the Petersson trace formula for families of cuspidal newforms \cite[(2.53)]{ILS00}, and analytic and harmonic weights naturally arise in the Kuznetsov and Selberg trace formulae for families of Maass forms \cite[Remark 1.6]{KL13, GK12}. In this view, many statistical surveys on families of automorphic $L$-functions \cite{Alp+15, GK12, ILS00} maintain these weights and calculate weighted, rather than unweighted, densities of their zeros. In all the above surveys, the weights have proved to be innocuous, in that they do not affect the symmetry type of the low-lying zeros, i.e., the uniform and weighted zeros have the same limiting distribution. However, this is not true in general. 

In 2012, Kowalski, Saha, and Tsimmerman \cite{KST12} demonstrated that more attention is needed in passing from weighted to unweighted densities. They found that the (unweighted) zeros of $GSp(4)$ spinor $L$-functions have orthogonal symmetry; whereas the same zeros, when assigned harmonic weights, exhibit symplectic symmetry. It is reasonable to expect that the distribution of a collection can change based on how the individual elements are weighted. Knightly and Reno \cite{KR18} confirmed this expectation in the commonly studied case of cuspidal newform $L$-functions. They observed both orthogonal and symplectic symmetry in families of holomorphic cuspidal newforms for different choices of weights. 

To be exact, we fix a real, primitive Dirichlet character $\chi$ of modulus $D\geq1$, and a positive integer $r$ relatively prime to $D$. Consider the Gauss sum $\tau(\chi)$ attached to $\chi$ (see \eqref{eq:gauss-sum} for the definition), and a basis $\mathcal{F}_k(N)'$ of Hecke cuspidal newforms of weight $k$ and level $N$. Knightly and Reno considered the following two families:
\begin{itemize}
    \item $\mathcal{F}_1=\bigcup_k \mathcal{F}_k(1)'$ with $k$ ranging over even integers satisfying $\tau(\chi)^2\neq-i^kD$;
    \item $\mathcal{F}_2=\bigcup_{k,N}\mathcal{F}_k(N)'$ with $k>2$ ranging over even integers satisfying $\tau(\chi)^2=-i^kD$, and $N$ ranging over primes not dividing $rD$. We order the sub-families $\mathcal{F}_k(N)'$ so that $k+N$ is increasing. 
\end{itemize}
The conditions on $k$ and $N$ serve to simplify our computations, as we demonstrate in the beginning of Section \ref{sec:4}. To any Hecke cuspidal newform $f(z)=\sum_{n=1}^\infty a_f(n)e^{2 \pi inz}$ in these families, they assigned the weight 
\begin{equation}\label{eq:weights}
  w_{\chi,r}(f) \ := \ \frac{\Lambda\left(\frac{1}{2},f\times\chi\right)|a_f(r)|^2}{\lVert f\rVert^2},
\end{equation}
where $\Lambda\left(s,f\times\chi\right)$ is the completed $L$-function, defined in \eqref{eq:completed-l-function}. These weights are non-negative by Guo's theorem \cite{Guo96}. It is natural to study weights containing central (twisted) $L$-values given the original interest in zeros near the central point \cite{Faz24}, not to mention the Fourier coefficients in the weights also contain information about central $L$-values. With this setup, Knightly and Reno considered the average weighted one-level density of each sub-family $\mathcal{F}_k(N)'$:
\begin{equation}\label{eq:average-density}
\mathcal{A}^{w_{\chi,r}}_{\mathcal{F}_k(N)'}(D(\cdot,\phi)) \ := \ \frac{\sum_{f\in\mathcal{F}_k(N)'}w_{\chi,r}(f)D(f,\phi)}{\sum_{f\in\mathcal{F}_k(N)'}w_{\chi,r}(f)}.
\end{equation}
They proved that for suitably restricted test functions $\phi$, the limiting distribution of the average weighted one-level densities for both $\mathcal{F}_1$ and $\mathcal{F}_2$ varies depending on the triviality of the twisting character in the weight \eqref{eq:weights}:
\begin{align}\label{eq:knightly-reno}\nonumber
\lim_{k\to\infty}\mathcal{A}^{w_{\chi,r}}_{\mathcal{F}_k(1)'}\left(D(\cdot,\phi)\right)
\ &= \ \lim_{k+N\to\infty}\mathcal{A}^{w_{\chi,r}}_{\mathcal{F}_k(N)'}\left(D(\cdot,\phi)\right) \\ & = \  \begin{cases}
    \int_{-\infty}^\infty\phi(x)W_\text{Sp}(x)dx & \text{if }\chi\text{ trivial}, \\ 
    \int_{-\infty}^\infty\phi(x)W_\text{O}(x)dx & \text{if }\chi\text{ non-trivial},
\end{cases}
\end{align}
where the limits are over the aforementioned values of $k$ and $N$. It is instructive to compare \eqref{eq:knightly-reno} to \cite[Theorem 1.1]{ILS00}, which establishes that the limiting distribution of the average unweighted one-level densities for similar families of Hecke cuspidal newforms is orthogonal. In the case of the trivial twisting character, the twisted $L$-function is equal to the original $L$-function; hence the central value of the (twisted) $L$-function is strongly correlated to the zeros near the central point of the original $L$-function (a large central $L$-value generally implies few low-lying zeros). This explains why trivial weighting characters change the limiting distribution from orthogonal to symplectic. On the other hand, in the case of non-trivial twisting characters, the twisted $L$-function (in particular, the twisted central value) is presumably independent of the original $L$-function (in particular, its low-lying zeros). This is to say that these non-trivial characters make for somewhat random weights, independent of the low-lying zeros that they are weighting, so the distribution, in the limit, converges as expected to the orthogonal distribution.

We consider the other arithmetically insightful statistic, the $n$\textsuperscript{th} centered moment of the one-level density, for the same families and weights in an effort to generalize the dependence between weights and symmetry type. To be exact, we study the weighted $n$\textsuperscript{th} centered moment of the average one-level density of each sub-family $\mathcal{F}_k(N)'$:
\begin{align}\label{eq:average-moment}\nonumber
\mathcal{A}^{w_{\chi,r}}_{\mathcal{F}_k(N)'}&\left[\left(D(\cdot,\phi)-\mathcal{A}^{w_{\chi,r}}_{\mathcal{F}_k(N)'}(D(\cdot,\phi))\right)^n\right] \\
&:= \  \frac{\sum_{f\in\mathcal{F}_k(N)'}w_{\chi,r}(f)\left(D(f,\phi)-\mathcal{A}^{w_{\chi,r}}_{\mathcal{F}_k(N)'}(D(\cdot,\phi))\right)^n}{\sum_{f\in\mathcal{F}_k(N)'}w_{\chi,r}(f)}.
\end{align}
In addition to the notation so far, we define for any test function $\phi$, 
\begin{equation}
    \sigma_\phi \ := \ \left(\int_{-\infty}^\infty \widehat\phi(y)^2 |y| \, dy\right)^{1/2}.
\end{equation}
Our main result is that for suitably restricted test functions $\phi$, the weighted $n$\textsuperscript{th} centered moment of the average one-level densities for both $\mathcal{F}_1$ and $\mathcal{F}_2$ converges to the $n$\textsuperscript{th} centered moment of a Gaussian distribution with variance $\sigma_\phi^2$. 
\begin{theorem}\label{thm:main-thm} 
Let $\chi$ be a real, primitive Dirichlet character of modulus $D\geq1$, and let $r$ be a positive integer relatively prime to $D$. For any positive integer $n$ and any test function $\phi$ with $\text{supp}(\widehat{\phi})\subset\left(-\frac{1}{2n},\frac{1}{2n}\right)$,
    \begin{align}\label{eq:main-formula-1}
    \lim_{k\to\infty}\mathcal{A}^{w_{\chi,r}}_{\mathcal{F}_k(1)'}\left[\left(D(\cdot,\phi)-\mathcal{A}^{w_{\chi,r}}_{\mathcal{F}_k(1)'}(D(\cdot,\phi))\right)^n\right] \ = \ 
    &\begin{cases}
    (n-1)!! \ \sigma_{\phi}^n & \text{if } n \text{ even}, \\ 
    0 & \text{if } n \text{ odd},
    \end{cases}\\
    \label{eq:main-formula-2} \lim_{k+N\to\infty}\mathcal{A}^{w_{\chi,r}}_{\mathcal{F}_k(N)'}\left[\left(D(\cdot,\phi)-\mathcal{A}^{w_{\chi,r}}_{\mathcal{F}_k(N)'}(D(\cdot,\phi))\right)^n\right] \ = \ 
    &\begin{cases}
    (n-1)!! \ \sigma_{\phi}^n & \text{if } n \text{ even}, \\ 
    0 & \text{if } n \text{ odd}.
    \end{cases}
    \end{align}
\end{theorem}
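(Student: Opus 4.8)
The plan is to run the standard density-statistics machinery — explicit formula, then a weighted trace formula — but now applied to $n$-fold products of prime sums, and then to extract from the resulting arithmetic sums the combinatorial skeleton of the Gaussian moments.

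\textbf{Step 1 (explicit formula and reduction to one prime sum).} Apply the $GL(2)$ explicit formula (as in \cite[Lemma~4.1]{ILS00}) to $D(f,\phi)$, converting the sum over zeros in \eqref{eq:1-level-density} into a sum over prime powers. Since the analytic conductor $Q_f$ is constant on each sub-family $\mathcal{F}_k(N)'$ — write $Q$ for its common value — one gets, with $\lambda_f$ the normalized Hecke eigenvalues,
\[
D(f,\phi)\ =\ M(\phi)\ -\ \frac{2}{\log Q}\sum_{\nu\ge 1}\sum_{p}\frac{\lambda_f(p^{\nu})\log p}{p^{\nu/2}}\,\widehat\phi\!\Big(\frac{\nu\log p}{\log Q}\Big)\ +\ O\!\Big(\frac{1}{\log Q}\Big),
\]
where $M(\phi)$ does not depend on $f$. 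The $\nu\ge 3$ sum is absolutely convergent, hence $O(1/\log Q)$; the $\nu=2$ sum, after subtracting its weighted average, has all $L^m$-norms $O(1/\log Q)$ (because $\sum_p(\log^2 p)/p^2$ converges), so by Hölder it does not affect any limiting centered moment. Writing $S(f):=\sum_p (\log p)\,p^{-1/2}\,\widehat\phi(\log p/\log Q)\,\lambda_f(p)$, this reduces \eqref{eq:average-moment} to evaluating
\[
\Big(\tfrac{-2}{\log Q}\Big)^{n}\,\mathcal{A}^{w_{\chi,r}}_{\mathcal{F}_k(N)'}\!\Big[\big(S(f)-\mathcal{A}^{w_{\chi,r}}_{\mathcal{F}_k(N)'}(S)\big)^{n}\Big]\ +\ o(1);
\]
note that the $\chi$-, $r$- and symmetry-type information from \eqref{eq:knightly-reno} lives entirely in the subtracted mean $\mathcal{A}^{w_{\chi,r}}_{\mathcal{F}_k(N)'}(S)$ and in $M(\phi)$.

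\textbf{Step 2 (expansion, Hecke multiplicativity, and the generalized trace formula).} Expanding the $n$th power produces sums of the shape
\[
\sum_{p_1,\dots,p_n}\frac{\log p_1\cdots\log p_n}{\sqrt{p_1\cdots p_n}}\ \prod_{i=1}^n\widehat\phi\!\Big(\frac{\log p_i}{\log Q}\Big)\ \mathcal{A}^{w_{\chi,r}}_{\mathcal{F}_k(N)'}\!\big[\lambda_f(p_1)\cdots\lambda_f(p_n)\big],
\]
together with the corrections coming from the subtracted mean. By Hecke multiplicativity, $\lambda_f(p_1)\cdots\lambda_f(p_n)=\sum_M c_M\,\lambda_f(M)$, a finite sum over integers $M$ built from the $p_i$ with explicit Catalan-type coefficients $c_M$. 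Evaluate $\mathcal{A}^{w_{\chi,r}}_{\mathcal{F}_k(N)'}[\lambda_f(M)]$ via the extension of Knightly--Reno's weighted trace formula from primes to arbitrary positive integers; this yields an explicit main term $T(M)$ (whose shape carries the $\chi$- and $r$-dependence) plus Kloosterman/Bessel-type error. The hypothesis $\text{supp}(\widehat\phi)\subset(-\tfrac1{2n},\tfrac1{2n})$ forces $p_i<Q^{1/(2n)}$, hence $M\le p_1\cdots p_n<Q^{1/2}$, which is exactly the range in which the trace-formula error — summed over all $O\big(\pi(Q^{1/(2n)})^n\big)$ tuples — is $o(1)$.

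\textbf{Step 3 (isolating the Gaussian skeleton; the combinatorial trick).} The only configurations surviving into the limiting centered moment are the ``paired'' ones: the $n$ primes split into $n/2$ essentially distinct pairs of equal primes, and each pair is reduced through $\lambda_f(p)^2=\lambda_f(p^2)+1$ by keeping its constant term $1$. Every other configuration either produces $\lambda_f(M)$ with $M$ not a perfect square (whose weighted average $T(M)$ is too small to matter after the prime sum), or carries a prime of multiplicity $\ge 4$ (costing a factor $(\log Q)^{-2}$, since $\sum_p(\log p)^4 p^{-2}$ converges), or yields a ``disconnected'' piece that is exactly cancelled by the powers of $\mathcal{A}^{w_{\chi,r}}_{\mathcal{F}_k(N)'}(S)$ introduced in the centering — in particular the nonvanishing of $\mathcal{A}^{w_{\chi,r}}_{\mathcal{F}_k(N)'}(\lambda_f(p))$, the source of the symmetry in Knightly--Reno, affects only the subtracted mean. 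Hence for $n$ odd the main term vanishes, while for $n$ even there are $(n-1)!!$ matchings of $\{1,\dots,n\}$, each contributing
\[
\prod_{\text{pairs}}\ \sum_{p}\frac{(\log p)^2}{p}\,\widehat\phi\!\Big(\frac{\log p}{\log Q}\Big)^{\!2}\ \sim\ \big(c\,\sigma_\phi^2\,(\log Q)^2\big)^{n/2}
\]
by the prime number theorem, with $c$ an absolute constant fixed by the explicit-formula normalization. An elementary combinatorial identity repackages this main number-theoretic sum so that, after multiplying by $(-2/\log Q)^n$, it is manifestly $(n-1)!!\,\sigma_\phi^{\,n}$; since nothing here distinguished $\mathcal{F}_1$ from $\mathcal{F}_2$ or trivial from non-trivial $\chi$, this establishes \eqref{eq:main-formula-1} and \eqref{eq:main-formula-2} simultaneously.

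\textbf{Main obstacle.} The heart of the argument is Steps~2--3: proving the generalized weighted trace formula with a clean enough main-versus-error split, and then the bookkeeping showing that \emph{every} non-paired configuration from the $n$-fold Hecke expansion is either swallowed by the trace-formula error (using $M<Q^{1/2}$) or precisely cancelled by the centering. The terms carrying $\mathcal{A}^{w_{\chi,r}}_{\mathcal{F}_k(N)'}(\lambda_f(p))$ must be tracked with care — they encode the symmetry type yet must leave the centered moments untouched — and the error must be controlled \emph{after} the full $n$-fold summation over prime tuples, not one tuple at a time.
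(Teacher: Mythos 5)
Your proposal follows the paper's high-level architecture exactly — explicit formula, generalized weighted trace formula for $\lambda_f(m)$ at arbitrary $m$, support restriction to $(-\tfrac{1}{2n},\tfrac{1}{2n})$ to kill the trace-formula error after the $n$-fold prime sum, and a final combinatorial identification of the Gaussian moments. Where you diverge is the combinatorial bookkeeping in Step 3. You center the prime sum $S$ itself, so that singleton (``disconnected'') blocks vanish tautologically from $\mathcal{A}\bigl[\lambda_f(p)-\mu_p\bigr]=0$, and the Gaussian moment then emerges from simply counting perfect pairings; this gives a uniform treatment of trivial and non-trivial $\chi$. The paper instead writes $D-\mathcal{A}(D)=\phi(0)-2S+O(\cdot)$ in the trivial case, so the constant $\phi(0)$ (and a contribution $\phi(0)/2$ from each lone prime) is carried through the entire expansion and cancelled only at the end by a two-fold application of the binomial theorem (the ``elementary combinatorial trick'' of Subsection~5.2), while the non-trivial case is handled separately. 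Your framing is cleaner, but it is not a free lunch: the ``exact cancellation'' of disconnected pieces is equivalent to the paper's binomial identity and relies on the same approximate multiplicativity $\mathcal{A}[\lambda_f(p_1)\cdots\lambda_f(p_\ell)]\approx\prod_i\mu_{p_i}$ for distinct $p_i$, together with a sum/product interchange with controlled error (the paper's Appendix~A Claim); you flag this as the main obstacle but do not carry it out. Two small inaccuracies worth noting: (i) multiplicity $\ge 3$, not only $\ge 4$, is already negligible (a block of size $3$ forces $m\ge1$ so $p^{-(3+1)/2}=p^{-2}$, convergent), which your ``not a perfect square'' clause must be read as covering; (ii) for $b=1$ the resulting $M=p$ is also not a perfect square yet its weighted average is \emph{not} too small to matter — it is $O(1)$ after the prime sum — so it must be handled by the cancellation-against-the-centering mechanism, not by the ``too small'' dichotomy; your text does treat it separately, but the trichotomy as phrased is a little misleading.
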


\begin{remark}\label{rmk:diff}
It would be interesting to increase the support beyond $(-\frac{1}{n},\frac{1}{n})$ because we can start to distinguish between $W_\text{O}(x)$ and $W_\text{Sp}(x)$ as distributions only beyond this range, as noted in the opening discussion. In the current scope, Theorem \ref{thm:main-thm} does not detect the difference between the orthogonal and symplectic distributions, and hence the dependence between weights (particularly the triviality of the twisting character therein) and symmetry type. 
\end{remark}

\begin{remark}\label{rmk:rmt}
The number theoretic statement of Theorem \ref{thm:main-thm} aligns with random matrix theory: for similarly restricted test functions $\phi$, the $n$\textsuperscript{th} centered moment of the one-level densities for both the orthogonal and symplectic matrix ensembles \cite{HM07, HR07} also converge to the $n$\textsuperscript{th} centered moment of a Gaussian distribution with variance $\sigma_\phi^2$. However, when the support is beyond $[-\frac{1}{n},\frac{1}{n}]$, these random matrix statistics are known to no longer follow Gaussian behavior. Extending the support and realizing the same non-Gaussian behavior on the number theory side, i.e., in the setting of Theorem \ref{thm:main-thm}, would provide striking evidence for the conjectured connection between these two sub-fields.
\end{remark}

\begin{remark}
When $n=1$, we use one of the two main results from the study of Knightly and Reno: \eqref{eq:knightly-reno}. In their other main result \cite[Theorem 1.3]{KR18}, Knightly and Reno considered different weights:
\begin{equation}\label{eq:alternate-weight}
    w_{\chi}'(f)=\frac{\Lambda\left(\frac{1}{2},f\times\chi\right)\Lambda\left(\frac{1}{2},f\right)}{\lVert f\rVert^2}.
\end{equation}
It would be instructive to study the $n$\textsuperscript{th} centered moment of the one-level density with these weights as well, to further demonstrate the dependence between weights and symmetry type. 
\end{remark}

To prove Theorem \ref{thm:main-thm}, we follow the general approach for density calculations outlined above. In Section \ref{sec:2}, we review this relevant mathematical context in more detail and collect some standard results on modular forms.

In Section \ref{sec:3}, we apply the explicit formula for the one-level density of cuspidal newform $L$-functions (Lemma \ref{lemma:explicit-formula}), which leads us from an average of zeros to an average of Hecke eigenvalues over the sub-families $\mathcal{F}_k(N)'$. While Knightly and Reno \cite[Proposition 3.1]{KR18} also considered averages of Hecke eigenvalues over $\mathcal{F}_k(N)'$, they did so only at prime powers because it sufficed for their one-level calculations. The averages that emerge in our $n$-level calculations entail Hecke eigenvalues at arbitrary integers. This complexity is generated by cross terms coming from $n$-fold products of primes rather than individual primes (one-fold products of primes, so to speak). It may be instructive to compare \eqref{eq:2} and \cite[(4.1)]{KR18} in this regard.

In Section \ref{sec:4}, we appeal to a formula given in \cite[Theorem 1.1]{JK15} for a related weighted average of Fourier coefficients over $\mathcal{F}_k(N)'$. Leveraging the relation between Fourier coefficients and Hecke eigenvalues, we derive an asymptotic trace formula (Lemma \ref{lemma:asym-trace-formula}) for the average over $\mathcal{F}_k(N)'$ of Hecke eigenvalues at arbitrary positive integers, not just at prime powers. Several new and interesting number theoretic terms arise in our derivation (see Lemmas \ref{lemma:1} and \ref{lemma:1.2}).

In Section \ref{sec:5}, we analyze these number theoretic terms in several cases, mirroring the proof of \cite[Lemma 3.1]{HM07}. We do this separately for non-trivial and trivial twisting characters, in Subsections \ref{subsec:5.1} and \ref{subsec:5.2} respectively, because the distributions are provably different \eqref{eq:knightly-reno} and the analyses are demonstrably different between these cases. 

In the case of trivial twisting characters, it is not obvious a priori that the number theoretic terms resulting from our analysis match up exactly with the expected random matrix theoretic terms (\ref{eq:main-formula-2}). This apparent mismatch is observed in most, if not all, $n$-level calculations \cite{Rub01, Gao08, HM07, HR02, Sos00}. This is because the random matrix theoretic terms are derived for arbitrary support, cf. \cite{KS99a, KS99b}; whereas the number theoretic terms are derived for restricted support, out of technical necessity. Besides, since these calculations usually use some combinatorial argument (such as the principle of inclusion-exclusion) to express the $n$-level statistic in terms of the corresponding one-level statistic, it is instructive to reverse this combinatorial argument in the end, i.e., unravel the combinatorial expression that emerges after the one-level statistic is calculated. In this vein, Soshnikov \cite{Sos00} developed a combinatorial trick relating to generating series, which has been referenced and adapted in several subsequent $n$-level calculations \cite{HM07, HR02}; the essence of this trick is a deep combinatorial fact called the Hunt-Dyson formula. Novelly, our combinatorial strategy for unraveling the number theoretic terms in Subsection \ref{subsec:5.2} does not rely on this trick. We use only elementary methods to rewrite the resulting combinatorial factors and motivate our otherwise unassuming applications of the binomial theorem. This represents the main challenge in going from Knightly and Reno's one-level calculation, which did not require any combinatorial argument, to our $n$-level calculation. It would be interesting to assess the applicability of our elementary combinatorial argument in other $n$-level calculations, like the ones referenced above. 

\begin{remark}\label{rmk:support}
In view of the many incentives to extending support, e.g., Remarks \ref{rmk:diff} and \ref{rmk:rmt}, it is worth mentioning that the current support restrictions are in place only to bound the contribution of the error term in the asymptotic trace formula (see Lemma \ref{lemma:error-term-analysis}). All other contributions are analyzed unconditionally, for arbitrary support. Extending the support, therefore, entails finding more lower-order terms in the respective trace formula and thereby analyzing a smaller error term. 
\end{remark}
%
%

\section{Notation and Preliminaries}\label{sec:2}

We follow the definitions and notation in \cite{IK04} throughout. We define the $N$\textsuperscript{th} congruence subgroup of $SL_2(\mathbb{Z})$ by

\begin{equation}\label{eq:cong-subgroup}
\Gamma_0(N) \ := \ \left\{ \begin{pmatrix} 
a & b \\ c & d 
\end{pmatrix} \in SL_2(\mathbb{Z}) \ \Bigg| \ c \equiv 0 \pmod{N} \right\}.
\end{equation} 
A holomorphic cusp form of weight $k$ and level $N$ is a function $f$ on the complex upper half-plane $\mathbb{H}$ that transforms ``nicely'' under the action of $\Gamma_0(N)$:
\begin{equation}
f\left( \frac{az + b}{cz + d} \right) = (cz + d)^{k} f(z).
\end{equation}

We denote the space of all holomorphic cusp forms of weight $k$ and level $N$ by $S_k(N)$. This space is equipped with the Petersson inner product, making it Hilbert. The inner product is given by
\begin{equation}\label{eq:Petersson}
\langle f,g\rangle \ := \ \frac{1}{\nu(N)}\int_{\Gamma_0(N) \backslash\mathbb{H}}f(z)\overline{g(z)}\,y^k\,\frac{dx\,dy}{y^2},
\end{equation}
where $\nu(N):=[\mathrm{SL}_2(\mathbb{Z}):\Gamma_0(N)]$. 
Consider the Hecke operators which, for each $n\in \mathbb N$, act on the linear space $S_k(N)$ and are given by
\begin{equation}\label{eq:Hecke}
T_nf(z) \ := \ n^{k-1}\sum_{\substack{ad=n \\(a,N)=1}}\sum_{b=0}^{d-1}d^{-k}f\left(\frac{az+b}{d}\right).  
\end{equation} 
For all $n$ satisfying $(n,N)=1$, the Hecke operators $T_n$ simultaneously diagonalize in $S_k(N)$, so we may speak of an orthogonal basis $\mathcal{F}_k(N)$ of forms which are simultaneous eigenforms of all these Hecke operators (Hecke eigenforms for short). The forms $f\in\mathcal{F}_k(N)$ admit a Fourier expansion
\begin{equation}\label{eq:Fourier}
f(z)=\sum_{n=1}^{\infty}a_f(n)e^{2\pi inz},
\end{equation}
where the coefficients $a_f(n)$ are complex numbers normalized so that $a_f(1)=1$. We emphasize that $\mathcal{F}_k(N)$ is finite; in fact, from \cite[(2.73)]{ILS00},
\begin{equation}
    \left|\mathcal{F}_k(N)\right|\sim\frac{k-1}{12}\phi(N)+O\left((kN)^{5/6}\right),
\end{equation} 
where $\phi$ is Euler's totient function. For any Hecke eigenform $f\in\mathcal{F}_k(N)$, we refer to the eigenvalue of $f$ under $T_n$ as the $n$\textsuperscript{th} Hecke eigenvalue of $f$. It is straightforward from \eqref{eq:Hecke} and \eqref{eq:Fourier} that the Hecke eigenvalues of $f$ are closely related to the Fourier coefficients of $f$: 
\begin{equation}\label{eq:Fourier-Hecke}
    a_f(n) \ = \ \lambda_f(n)n^{(k-1)/2}.
\end{equation}
Furthermore, the Hecke eigenvalues of $f$ possess useful multiplicative properties: for all $m,n\in\mathbb{N}$,
\begin{equation}
\lambda_f(m)\lambda_f(n) \ = \ \sum_{\substack{d\mid(m,n)\\ (d,N)=1}}\lambda_f\left(\frac{mn}{d^2}\right);
\end{equation}
in particular, if $(m,n)=1$, then
\begin{equation}\label{eq:mult-prop}
\lambda_f(m)\lambda_f(n) \ = \ \lambda_f(mn),
\end{equation}
and if $p$ is a prime not dividing $N$ and $\ell \geq 0$ is an integer, then
\begin{align}\label{eq:hecke-comb}
\lambda_f(p)^{\ell} \ &= \ \sum_{\substack{0 \leq r \leq \ell \\ r \equiv \ell (2)}}^\ell c_{\ell,r}\cdot\lambda_f(p^{r}),
\end{align}
where $c_{\ell,r}$ is defined recursively for all $\ell' \geq 1$ by
\begin{align}
\label{eq:hecke-comb-recursive}\nonumber
c_{1,1} \ &:= \ 1, \\
\nonumber
c_{2\ell',2r'} \ &:= \ c_{2\ell'-1,2r'-1}+c_{2\ell'-1,2r'+1} && \text{ for all } 0 < r' < \ell', \\
\nonumber
c_{2\ell',0} \ &:= \ c_{2\ell'-1,1}, \\
\nonumber
c_{2\ell',2\ell'} \ &:= \ c_{2\ell'-1,2\ell'-1}, \\
\nonumber
c_{2\ell'+1,2r'+1} \ &:= c_{2\ell',2r'}+c_{2\ell',2r'+2} && \text{ for all } 0 \leq r' < \ell', \\
c_{2\ell'+1,2\ell'+1} \ &:= c_{2\ell',2\ell'} && \text{ for all } 0 \leq r' < \ell'.
\end{align}
Using Stirling's formula, we have the following crude bound for $c_{\ell,r}$: for all $1 \leq r \leq \ell$,
\begin{equation}\label{eq:hecke-bound}
c_{\ell,r} \ \leq \ 2^{\ell}.
\end{equation}
The multiplicative properties of the Hecke eigenvalues of $f$ motivate the definition of the $L$-function associated to $f$:
\begin{equation}
L(s,f) \ := \ \sum_{n=1}^{\infty}\frac{\lambda_f(n)}{n^s}, \quad \Re(s)>1.
\end{equation}
Interesting variants of this $L$-function can be obtained by twisting it with Dirichlet characters. Formally speaking, for a fixed integer $D\geq1$ with $(D,N)=1$, and a primitive Dirichlet character $\chi$ of modulus $D$, the $L$-function of $f$ twisted by $\chi$ is given by
\begin{equation}
    L(s,f\times\chi) \ := \ \sum_{n=1}^\infty\frac{\chi(n)\lambda_f(n)}{n^s},\quad\Re(s)>1.
\end{equation}
We complete this twisted $L$-function with an appropriate Gamma factor, analytically continuing it to the entire complex plane:
\begin{equation}\label{eq:completed-l-function}
    \Lambda(s,f\times\chi) \ := \ (2\pi)^{-s-\frac{k-1}{2}}\Gamma\left(s+\frac{k-1}{2}\right)L(s,f\times\chi).
\end{equation} 
The completed $L$-function satisfies a functional equation relating $s$ to $1-s$. For example, when $N=1$, the functional equation is
\begin{equation}\label{eq:functional}
    \Lambda(s,f\times\chi) \ = \ \frac{i^k}{D^{2s-1}}\frac{\tau(\chi)^2}{D} \ \Lambda(1-s,f\times\overline{\chi}),
\end{equation} where $\tau(\chi)$ is the Gauss sum attached to $\chi$:
\begin{equation}\label{eq:gauss-sum}
\tau(\chi)\ := \ \sum_{m=1}^D\chi(m)e^{2\pi im/D}.
\end{equation}

With all this said, however, given any form in $S_k(N)$, it is possible to induce a form in $S_k(M)$ for all $M>N$ with $N\mid M$; the induced form is aptly called an ``oldform.'' The forms orthogonal to the space spanned by oldforms are called ``newforms''. This theory, developed by Atkin and Lehner in 1970 \cite{AL70}, is relevant to our study because oldforms of level $M$ are related more naturally to the newforms (of level $N<M$) which induce them than the newforms of level $M$, by way of their analytic conductors. (As mentioned in Section \ref{sec:1}, the analytic conductor of an $L$-function encodes the approximate imaginary part of its low-lying zeros.) In particular, the analytic conductor of any oldform of level $M$ induced from a newform of level $N$ is equal to the analytic conductor of any newform of level $N$. In view of the Katz-Sarnak philosophy of studying families of (naturally related) $L$-functions, we filter out the oldforms in $\mathcal{F}_k(N)$ and direct our interest to the remaining set $\mathcal{F}_k(N)'$ of exclusively newforms, all of which have analytic conductor:
\begin{equation}\label{eq:analytic-conductor}
    Q_{k,N} \ := \ k^2N. 
\end{equation}

We now describe the setup for the rest of our paper. We fix a real, primitive Dirichlet character $\chi$ of modulus $D\geq1$, and a positive integer $r$ relatively prime to $D$. Reiterating Section \ref{sec:1}, we follow Knightly and Reno \cite{KR18} in considering two families, each composed of the finite sub-families $\mathcal{F}_k(N)'$ discussed above:
\begin{itemize}
    \item $\mathcal{F}_1=\bigcup_k \mathcal{F}_k(1)'$ with $k>2$ ranging over even integers satisfying $\tau(\chi)^2\neq-i^kD$;
    \item $\mathcal{F}_2=\bigcup_{k,N}\mathcal{F}_k(N)'$ with $k>2$ ranging over even integers satisfying $\tau(\chi)^2=-i^kD$, equivalently $\chi(-1)=-i^k$, and $N$ ranging over primes not divisible by $rD$. We order the sub-families $\mathcal{F}_k(N)$ so that $k+N$ is increasing.
\end{itemize}
We assign weights to the Hecke newforms $f$ in these families,
\begin{equation}
  w_{\chi,r}(f) \ := \ \frac{\Lambda\left(\frac{1}{2},f\times\chi\right)|a_f(r)|^2}{\lVert f\rVert^2},
\end{equation} 
and examine their influence on the $n$\textsuperscript{th} centered moment of their one-level densities \eqref{eq:average-moment}.

%
%

\section{Explicit Formula}\label{sec:3}

In this section, we restrict our attention to a single sub-family $\mathcal{F}_k(N)'$ of either $\mathcal{F}_1$ or $\mathcal{F}_2$. Given any Hecke eigenform $f\in\mathcal{F}_k(N)'$ and any test function $\phi$ whose Fourier transform $\widehat\phi$ is compactly supported, an explicit formula for the one-level density of $L(s,f)$ is given in \cite[(4.18)]{ILS00}:
\begin{align}
\nonumber
D(f;\phi) \ = \ \widehat{\phi}(0)+\frac{1}{2}\phi(0)+O\left(\frac{\log\log(3N)}{\log Q_{k,N}}\right)&-2\sum_{p\nmid N}\lambda_f(p)\widehat{\phi}\left(\frac{\log p}{\log Q_{k,N}}\right)\frac{\log p}{\sqrt{p}\log Q_{k,N}}\\
\label{eq:second-moment} &-2\sum_{p\nmid N}\lambda_f(p^2)\widehat{\phi}\left(\frac{2\log p}{\log Q_{k,N}}\right)\frac{\log p}{p\log Q_{k,N}}.
\end{align}

Assuming $\text{supp}(\widehat\phi) \subset \left(-\frac{1}{2},\frac{1}{2}\right)$, we know from \cite[88]{ILS00} that the second summation is
\begin{equation}
\sum_{p\nmid N}\lambda_f(p^2)\widehat{\phi}\left(\frac{2\log p}{\log Q_{k,N}}\right)\frac{\log p}{p\log Q_{k,N}} \ \ll \ \frac{\log\log(kN)}{\log Q_{k,N}}.
\end{equation}
Furthermore, Knightly and Reno studied the average weighted one-level density of $\mathcal{F}_k(N)'$ for $\phi$ restricted as above, proving that it depends on the triviality of the twisting character in the weight \cite[165]{KR18}: 
\begin{equation}\label{eq:KR-4.8}
\mathcal{A}^{w_{\chi,r}}_{\mathcal{F}_k(N)'}(D(\cdot,\phi)) \ = \ 
\begin{cases}
    \widehat{\phi}(0)-\frac{1}{2}\phi(0)+O\left(\frac{\log\log(3N)}{\log Q_{k,N}}\right) & \text{if }\chi\text{ trivial},\\
    \widehat{\phi}(0)+\frac{1}{2}\phi(0)+O\left(\frac{\log\log(3N)}{\log Q_{k,N}}\right) & \text{if }\chi\text{ non-trivial}.
\end{cases}
\end{equation}
We are interested in the weighted $n$\textsuperscript{th} centered moment of this average (weighted) one-level density:
\begin{equation}\label{eq:average-moment-1}
\mathcal{A}^{w_{\chi,r}}_{\mathcal{F}_k(N)'}\left[\left(D(\cdot,\phi)-\mathcal{A}^{w_{\chi,r}}_{\mathcal{F}_k(N)'}(D(\cdot,\phi))\right)^n\right].
\end{equation}
Subtracting \eqref{eq:KR-4.8} from \eqref{eq:second-moment}, we determine that \eqref{eq:average-moment-1} equals
\begin{align}\label{eq:1}
\begin{cases}
\mathcal{A}^{w_{\chi,r}}_{\mathcal{F}_k(N)'}\left[\left(\phi(0)-2\sum_{p\nmid N}\lambda_\cdot(p)\widehat{\phi}\left(\frac{\log p}{\log Q_{k,N}}\right)\frac{\log p}{\sqrt{p}\log Q_{k,N}}+O\left(\frac{\log\log(kN)}{\log Q_{k,N}}\right)\right)^n\right] \\ 
\text{ if } \chi \text{ trivial}, \\
\mathcal{A}^{w_{\chi,r}}_{\mathcal{F}_k(N)'}\left[\left(-2\sum_{p\nmid N}\lambda_\cdot(p)\widehat{\phi}\left(\frac{\log p}{\log Q_{k,N}}\right)\frac{\log p}{\sqrt{p}\log Q_{k,N}}+O\left(\frac{\log\log(kN)}{\log Q_{k,N}}\right)\right)^n\right] \\ 
\text{ if } \chi \text{ non-trivial}. \\
\end{cases}
\end{align}
In this view, we attend to averages of the form
\begin{align}\label{eq:1'}
\mathcal{A}^{w_{\chi,r}}_{\mathcal{F}_k(N)'}\left[\left(\sum\nolimits_{p\nmid N}\lambda_\cdot(p) \ \widehat{\phi}\left(\frac{\log p}{\log Q_{k,N}}\right)\frac{\log p}{\sqrt{p}\log Q_{k,N}}\right)^t\right],
\end{align} 
for $0\leq t\leq n$.

\begin{lemma}\label{lemma:explicit-formula}
For any $0\leq t\leq n$, the average in \eqref{eq:1'} is equal to
\begin{align}
\nonumber
\sum_{\substack{1\leq\ell\leq t, \\ n_1+\dots+n_\ell=t}}\frac{t!}{\ell! \ n_1!\cdots n_\ell!}\sum_{\substack{(q_1,\dots,q_\ell), \\ q_j \text{ distinct},\\ q_j\nmid N}}&\left[\left(\prod_{j=1}^\ell\widehat{\phi}\left(\frac{\log q_j}{\log Q_{k,N}}\right)^{n_j}\frac{\log^{n_j} q_j}{q_j^{n_j/2}\log^{n_j} Q_{k,N}}\right)\right. \\ 
\label{eq:4'}
&\left.\times\sum_{\substack{0\leq m_j\leq n_j, \\ m_j\equiv n_j(2)}}\left(\prod_{j=1}^\ell c_{n_j,m_j}\right)\mathcal{A}^{w_{\chi,r}}_{\mathcal{F}_k(N)'}\left(\lambda_\cdot\left(\prod\nolimits_{j=1}^\ell q_j^{m_j}\right)\right)\right],
\end{align}
where all the $c_{n_j,m_j}$ are bounded by $2^t$.
\end{lemma}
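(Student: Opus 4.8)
The plan is to expand the $t$-th power inside \eqref{eq:1'} into a sum over $t$-tuples of primes, regroup the terms by the distinct primes occurring in each tuple, and then linearize each resulting power $\lambda_\cdot(q_j)^{n_j}$ using the combinatorial Hecke relations \eqref{eq:hecke-comb}--\eqref{eq:hecke-comb-2}. Set $g(p):=\lambda_\cdot(p)\,\widehat{\phi}\big(\tfrac{\log p}{\log Q_{k,N}}\big)\tfrac{\log p}{\sqrt{p}\,\log Q_{k,N}}$, so that the quantity inside \eqref{eq:1'} is $\big(\sum_{p\nmid N}g(p)\big)^t$; since $\widehat{\phi}$ has compact support and $Q_{k,N}$ is a fixed positive number, only finitely many primes contribute, so every sum below is finite and commutes freely with the finite average $\mathcal{A}^{w_{\chi,r}}_{\mathcal{F}_k(N)'}$. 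Expanding the power gives $\sum_{(p_1,\dots,p_t)}\prod_{i=1}^t g(p_i)$ over ordered $t$-tuples with each $p_i\nmid N$, and I would sort each tuple by its configuration of distinct primes $q_1,\dots,q_\ell$ with multiplicities $n_1,\dots,n_\ell$ (so $1\le\ell\le t$ and $n_1+\cdots+n_\ell=t$), for which $\prod_i g(p_i)=\prod_{j=1}^\ell g(q_j)^{n_j}$. The number of ordered $t$-tuples realizing a fixed configuration is the multinomial coefficient $\tfrac{t!}{n_1!\cdots n_\ell!}$; letting $\ell$ run over $1,\dots,t$, the composition $(n_1,\dots,n_\ell)$ over compositions of $t$, and $(q_1,\dots,q_\ell)$ over ordered $\ell$-tuples of \emph{distinct} primes not dividing $N$ overcounts each configuration exactly $\ell!$ times (the $\ell$ distinct pairs $(q_j,n_j)$ admit $\ell!$ orderings), which produces the factor $\tfrac1{\ell!}$. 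This yields
\[
\Big(\sum_{p\nmid N}g(p)\Big)^t=\sum_{\substack{1\le\ell\le t\\ n_1+\cdots+n_\ell=t}}\frac{t!}{\ell!\,n_1!\cdots n_\ell!}\sum_{\substack{(q_1,\dots,q_\ell)\ \text{distinct}\\ q_j\nmid N}}\ \prod_{j=1}^\ell\widehat{\phi}\Big(\tfrac{\log q_j}{\log Q_{k,N}}\Big)^{n_j}\frac{\log^{n_j}q_j}{q_j^{n_j/2}\log^{n_j}Q_{k,N}}\,\lambda_\cdot(q_j)^{n_j}.
\]

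Next, since each $q_j\nmid N$, the relations \eqref{eq:hecke-comb}--\eqref{eq:hecke-comb-2} give $\lambda_f(q_j)^{n_j}=\sum_{0\le m_j\le n_j,\ m_j\equiv n_j\,(2)}c_{n_j,m_j}\,\lambda_f(q_j^{m_j})$, where $c_{n,m}=\binom{n}{(n-m)/2}-\binom{n}{(n-m)/2-1}$ with the convention $\binom{n}{-1}=0$; in particular $0\le c_{n,m}\le\binom{n}{\lfloor n/2\rfloor}\le 2^n\le 2^t$, which is the asserted bound. Multiplying these identities over $j=1,\dots,\ell$ and using that the $q_j$ are distinct primes, so that $\prod_j q_j^{m_j}$ has pairwise coprime prime-power parts and hence $\prod_j\lambda_f(q_j^{m_j})=\lambda_f\big(\prod_j q_j^{m_j}\big)$ by \eqref{eq:mult-prop}, we get $\prod_j\lambda_f(q_j)^{n_j}=\sum_{\{m_j\}}\big(\prod_j c_{n_j,m_j}\big)\lambda_f\big(\prod_j q_j^{m_j}\big)$. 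Applying the linear average $\mathcal{A}^{w_{\chi,r}}_{\mathcal{F}_k(N)'}$ termwise and substituting into the previous display yields exactly \eqref{eq:4'}.

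The one step requiring genuine care is the regrouping: one must cleanly separate the choice of \emph{which} primes occur (ordered $\ell$-tuples of distinct primes, hence the $\tfrac1{\ell!}$) from \emph{where} they occur among the $t$ factors (the multinomial $\tfrac{t!}{n_1!\cdots n_\ell!}$), and observe that the overcount is exactly $\ell!$ even when some of the $n_j$ coincide — this holds because the pairs $(q_j,n_j)$ are always distinct when the primes $q_j$ are. Everything afterward — the Hecke linearization, the multiplicativity \eqref{eq:mult-prop}, the bound on $c_{n,m}$, and the interchange of the finitely many sums with the average — is routine given the material recorded in Section~\ref{sec:2}.
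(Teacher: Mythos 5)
Your proposal is correct and takes essentially the same route as the paper's proof: expand the $t$-th power into a sum over ordered $t$-tuples of primes, regroup by the distinct primes $q_1,\dots,q_\ell$ with multiplicities $n_1,\dots,n_\ell$, account for the overcounting via the factor $t!/(\ell!\,n_1!\cdots n_\ell!)$, linearize each $\lambda_\cdot(q_j)^{n_j}$ via the Hecke combinatorial identities, and combine by multiplicativity before averaging. Your write-up is a bit more explicit than the paper's on two points it leaves tacit — why the overcount is exactly $\ell!$ even when the $n_j$ coincide (because the $q_j$ are distinct), and the concrete bound $c_{n,m}\leq 2^t$ — but the argument is the same.
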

\begin{proof}
We expand the product of $t$ sums in \eqref{eq:1'} to get a sum over $t$-tuples:
\begin{align}\label{eq:2}
\sum_{\substack{(p_1,\dots,p_t) \\ p_i\nmid N}}\left[\left(\prod_{i=1}^t\widehat{\phi}\left(\frac{\log p_i}{\log Q_{k,N}}\right)\frac{\log p_i}{\sqrt{p_i}\log Q_{k,N}}\right) \ \mathcal{A}^{w_{\chi,r}}_{\mathcal{F}_k(N)'}\left(\prod\nolimits_{i=1}^t \lambda_\cdot(p_i)\right)\right].
\end{align}
To appeal to the multiplicative properties of Hecke eigenvalues \eqref{eq:mult-prop}, we consider the prime factorization: $\prod_{i=1}^t p_i=\prod_{j=1}^\ell q_j^{n_j}$. However, to change the index of summation from $p_i$ to $q_j$, we require the combinatorial factor indicating the number of $t$-tuples $(p_1,\ldots,p_t)$ with a given prime factorization $\prod_{j=1}^\ell q_j^{n_j}$:
\begin{equation}\label{eq:comb-factor}
\frac{1}{\ell!}\binom{t}{n_1}\binom{t-n_1}{n_2}\cdots\binom{n_\ell}{n_\ell} \ = \ \frac{t!}{\ell! \ n_1!\cdots n_\ell!}.
\end{equation}
With this, we make the desired change of index in \eqref{eq:2} to get
\begin{align}\nonumber
\sum_{\substack{1\leq\ell\leq t, \\ n_1+\dots+n_\ell=t}}\frac{t!}{\ell! \ n_1!\cdots n_\ell!}\sum_{\substack{(q_1,\dots,q_\ell)\\ q_j\nmid N}}&\left[\left(\prod_{j=1}^\ell\widehat{\phi}\left(\frac{\log q_j}{\log Q_{k,N}}\right)^{n_j}\frac{\log^{n_j} q_j}{q_j^{n_j/2}\log^{n_j} Q_{k,N}}\right)\right. \\
\label{eq:3}&\left.\times\mathcal{A}^{w_{\chi,r}}_{\mathcal{F}_k(N)'}\left(\prod\nolimits_{j=1}^\ell \lambda_\cdot(q_j)^{n_j}\right)\right].
\end{align}
As in \eqref{eq:hecke-comb}, we write $\lambda_\cdot(q_j)^{n_j}$ as a linear combination of Hecke eigenvalues at the powers of $q_j$:
\begin{equation}
\lambda_\cdot(q_j)^{n_j} \ = \ \sum_{\substack{0\leq m_j\leq n_j\\ m_j\equiv n_j(2)}}c_{n_j,m_j} \ \lambda_\cdot(q_j^{m_j}),
\end{equation} 
where each $c_{n_j,m_j}$ is bounded by $2^{n_j} \leq 2^t$ (see \eqref{eq:hecke-bound}). Making this substitution in \eqref{eq:3} gives \eqref{eq:4'}.
\end{proof}

Lemma \ref{lemma:explicit-formula} leads our study to weighted averages of Hecke eigenvalues over $\mathcal{F}_k(N)'$.

%
%

\section{Weighted Trace Formula}\label{sec:4}

In this section, we derive an asymptotic trace formula for the weighted average of the $m$\textsuperscript{th} Hecke eigenvalue over $\mathcal{F}_k(N)'$ when $(m,D)=1$: 
\begin{equation}\label{eq:average-Hecke}
\mathcal{A}^{w_{\chi,r}}_{\mathcal{F}_k(N)'}(\lambda_\cdot(m)) \ := \ \frac{\sum_{f\in\mathcal{F}_k(N)'}w_{\chi,r}(f)\lambda_f(m)}{\sum_{f\in\mathcal{F}_k(N)'}w_{\chi,r}(f)}.
\end{equation}

Trace formulas are more readily attainable for $\mathcal{F}_k(N)$ than $\mathcal{F}_k(N)'$. This is because the equidistribution laws that govern Hecke eigenvalues---and thereby influence trace formulas---are better understood for the space of all forms $S_k(N)$ rather than the space of newforms alone. Studying families of newforms usually entails extensive bookkeeping and adapting trace formulas from the ambient space to the space of newforms, e.g., \cite[Propositions 2.1-2.8]{ILS00}. Fortunately, we do not have to do any bookkeeping in our study, owing to the strategic conditioning on $k$ and $N$ in our definitions of $\mathcal{F}_1$ and $\mathcal{F}_2$.

For level $1$, it is straightforward that $\mathcal{F}_k(1)=\mathcal{F}_k(1)'$ because there are no levels lower than $1$, and hence no forms to induce from. 

For prime level $N>1$, as Knightly and Reno explained in \cite[Section 4]{KR18}, the condition involving $\tau(\chi)$ in the definition of $\mathcal{F}_2$ licenses us to bypass bookkeeping. Every form $f\in\mathcal{F}_k(N)$ is either a newform of level $N$ or an oldform of level $N$ induced from a newform $g$ of level $1$. In the latter case, since $\tau(\chi)^2=-i^k D$, the functional equation \eqref{eq:functional} for $\Lambda(s,g\times\chi)$ forces $w_{\chi,r}(g)$ to vanish. For the induced form $g_N(z):=g(Nz)$ too, the weight vanishes:
\begin{align}
    \Lambda\left(\frac{1}{2},g_N\times\chi\right) \ = \ \frac{\chi(N)}{N^{k/2}}\Lambda\left(\frac{1}{2},g\times\chi\right) \ = \ 0 
    \implies w_{\chi,r}(g_N)=0.
\end{align}
It follows that the weight vanishes for all forms in the span of $g_N$, $f$ in particular. This is to say that $w_{\chi,r}(f)$ vanishes for all oldforms $f\in\mathcal{F}_k(N)$; hence any $w_{\chi,r}$-weighed average over $\mathcal{F}_k(N)$ reduces to a $w_{\chi,r}$-weighted average over $\mathcal{F}_k(N)'$, and we can refer to these interchangeably.

In the interest of finding a weighted trace formula for (\ref{eq:average-Hecke}), we call attention to a related formula in \cite[Theorem 1.1]{JK15}. For all complex numbers $s=\sigma+it$ in the strip $1-\frac{k-1}{2}<\sigma<\frac{k-1}{2}$, Jackson and Knightly derived a formula for the sum of Fourier coefficients over $\mathcal{F}_k(N)$ weighted by
\begin{equation}\label{eq:JK-weights}
w_{\chi,r}'(f)\ := \ \frac{\Lambda\left(s,f\times\chi\right)\overline{a_f(r)}}{\lVert f\rVert^2},\quad f\in\mathcal{F}_k(N).
\end{equation}
When $k>2$ and $N=1$, for all $m\geq1$ satisfying $(m,D)=1$, they get
\begin{align}\nonumber
\sum_{f\in\mathcal{F}_k(1)}w_{\chi,r}'(f)a_f(m)& \\
\nonumber 
= \ & \ \frac{2^{k-1}(2 \pi rm)^{\frac{k-1}{2}-s}}{(k-2)!}\Gamma\left(s+\frac{k-1}{2}\right)\sum_{d\mid(r,m)}d^{2s}\chi\left(\frac{rm}{d^2}\right)\\
\nonumber
+ \ & \frac{2^{k-1}(2\pi rm)^{\frac{k-3}{2}+s}}{(k-2)!}\Gamma\left(\frac{k+1}{2}-s\right)\frac{i^k}{D^{2s-1}}\frac{\tau(\chi)^2}{D}\sum_{d\mid(r,m)}d^{-2s+2}\overline{\chi\left(\frac{rm}{d^2}\right)}\\
\label{eq:KR-trace-1.1'}
+ \ & O\left(\gcd(r,m)\frac{(4\pi rm)^{k-1}D^{\frac{k}{2}-\sigma}\varphi(D)}{N^{\sigma+\frac{k-1}{2}}(k-2)!}\right);
\end{align}
likewise, when $k>2$ and $N>1$, for all $m\geq1$ with $(m,D)=1$, they get
\begin{align}\nonumber
\sum_{f\in\mathcal{F}_k(N)}w_{\chi,r}'(f)a_f(m) & \\
\nonumber 
= \ & \ \frac{2^{k-1}(2 \pi rm)^{\frac{k-1}{2}-s}}{(k-2)!}\Gamma\left(s+\frac{k-1}{2}\right)\sum_{d\mid(r,m)}d^{2s}\chi\left(\frac{rm}{d^2}\right)\\
\label{eq:KR-trace-2.1'}
+ \ & O\left(\gcd(r,m)\frac{(4\pi rm)^{k-1}D^{\frac{k}{2}-\sigma}\varphi(D)}{N^{\sigma+\frac{k-1}{2}}(k-2)!}\right).
\end{align}
The implied constants are provided explicitly in \cite[(1.5)]{JK15} and depend only on $s$. As in \cite[160]{KR18}, we have adjusted these equations for the fact that \cite{JK15} normalizes the completed $L$-function $\Lambda(s,f\times\chi)$ to have central point $\frac{k}{2}$ rather than $\frac{1}{2}$; that is, we apply the transformation $s \mapsto s+\frac{k-1}{2}$ to \cite[(1.4)]{JK15} to satisfy our normalization.

In our application, we take $\chi$ to be a real character, so $\chi = \overline{\chi}$ and $\chi^2 = 1$; and $s = 1/2$. We also allow the implied constants to depend on $r$ and $D$ since they are fixed. Specializing \eqref{eq:KR-trace-1.1'} and \eqref{eq:KR-trace-2.1'} in this regard, we have that for all $k>2$ and $m\geq1$ with $(m,D)=1$,
\begin{align}\nonumber
\sum_{f\in\mathcal{F}_k(1)}w_{\chi,r}'(f)a_f(m)& \\
\nonumber 
= \ & \frac{2^{k-1}(2\pi)^{\frac{k}{2}-1}\Gamma\left(\frac{k}{2}\right)}{(k-2)!}\left(1+\frac{i^k\tau(\chi)^2}{D}\right)(rm)^{\frac{k}{2}-1}\chi(rm) \ \sigma_1((r,m))\\
\label{eq:KR-trace-1.1}
+ \ & O\left(m^{k-1}\frac{(4\pi r)^{k}D^{\frac{k}{2}}}{N^{\frac{k}{2}}(k-2)!}\right); \\
\nonumber
\sum_{f\in\mathcal{F}_k(N)}w_{\chi,r}'(f)a_f(m)& \\
\nonumber 
= \ & \frac{2^{k-1}(2\pi)^{\frac{k}{2}-1}\Gamma\left(\frac{k}{2}\right)}{(k-2)!}(rm)^{\frac{k}{2}-1}\chi(rm) \ \sigma_1((r,m))\\
\label{eq:KR-trace-2.1}
+ \ & O\left(m^{k-1}\frac{(4\pi r)^{k}D^{\frac{k}{2}}}{N^{\frac{k}{2}}(k-2)!}\right).
\end{align}
In the following three lemmas, we adapt these asymptotic trace formulas for the weighted average of our interest, i.e., with weights $w_{\chi,r}$ rather than $w_{\chi,r}'$: \eqref{eq:average-Hecke}. 

\begin{lemma}\label{lemma:1}
Suppose $k>2$. Let $\sigma_1$ be the divisor sum function, and $S_r(m) := \sum_{d \mid (r,m)}d \cdot \sigma_1\left(\left(r,\frac{rm}{d^2}\right)\right)$. Then, for any integer $m\geq1$ with $(m,D) = 1$, the sum of the $m$\textsuperscript{th} Hecke eigenvalue over $\mathcal{F}_k(1)$ is equal to
\begin{align}\nonumber
\sum_{f\in\mathcal{F}_k(1)}w_{\chi,r}(f)\lambda_f(m) & \\
\nonumber
= \ & \frac{2^{k-1}(2\pi r^2)^{\frac{k}{2}-1}\Gamma\left(\frac{k}{2}\right)}{(k-2)!}\left(1+i^k\frac{\tau(\chi)^2}{D}\right) m^{-\frac{1}{2}} \chi(m)  S_r(m)\\
\label{eq:sum-1}
+ \ & O\left(m^{\frac{k}{2}-1}\frac{(4\pi r)^{k}D^{\frac{k}{2}}}{N^{\frac{k}{2}}(k-2)!}\right),
\end{align}
where the implied constant depends only on $r$ and $D$. 
\end{lemma}
\begin{proof}
For all $f$, we have that $w_{\chi,r}(f)=w_{\chi,r}'(f)a_f(r)$. We use this relation between weights and the relation \eqref{eq:Fourier-Hecke} between Fourier coefficients and Hecke eigenvalues to pass between the sum of interest \eqref{eq:sum-1} and the sum already analyzed \eqref{eq:KR-trace-1.1}:
\begin{align}
\sum_{f\in\mathcal{F}_k(1)}w_{\chi,r}(f)\lambda_f(m) \ 
&= \ \sum_{f\in\mathcal{F}_k(1)}w_{\chi,r}'(f)r^{\frac{k-1}{2}}\lambda_f(r)\lambda_f(m).
\end{align}
Appealing to the multiplicative properties (\ref{eq:mult-prop}) of Hecke eigenvalues,
\begin{align}
\sum_{f\in\mathcal{F}_k(1)}w_{\chi,r}(f)\lambda_f(m) \ 
&= \ \sum_{f\in\mathcal{F}_k(1)}w_{\chi,r}'(f)r^{\frac{k-1}{2}}\sum_{d\mid(r,m)}\lambda_f\left(\frac{rm}{d^2}\right) \\
&= \ \sum_{f\in\mathcal{F}_k(1)}w_{\chi,r}'(f)r^{\frac{k-1}{2}}\sum_{d\mid(r,m)}\left(\frac{rm}{d^2}\right)^{-\frac{k-1}{2}}a_f\left(\frac{rm}{d^2}\right).
\end{align}
Changing the order of these finite summations,
\begin{align}
\label{eq:recent}
\sum_{f\in\mathcal{F}_k(1)'}w_{\chi,r}(f)\lambda_f(m) \
&= \ \sum_{d\mid(r,m)} \left(\frac{m}{d^2}\right)^{-\frac{k-1}{2}}\sum_{f\in\mathcal{F}_k(1)}w_{\chi,r}'(f)a_f\left(\frac{rm}{d^2}\right).
\end{align}
Appealing to \eqref{eq:KR-trace-1.1} and recalling that $\chi$ is real, i.e., $\chi^2=1$, we find that (\ref{eq:recent}) equals
\begin{align}
\nonumber
&\ \frac{2^{k-1}(2\pi)^{\frac{k}{2}-1}\Gamma\left(\frac{k}{2}\right)}{(k-2)!}\left(1+i^k\frac{\tau(\chi)^2}{D}\right)\sum_{d\mid(r,m)}\left(\frac{m}{d^2}\right)^{-\frac{k-1}{2}}\left(\frac{r^2m}{d^2}\right)^{\frac{k}{2}-1}\chi\left(\frac{r^2m}{d^2}\right)\sigma_1\left(\left(r,\frac{rm}{d^2}\right)\right)\\
\nonumber
&+O\left(\frac{(4\pi r)^{k}D^{\frac{k}{2}}}{N^{\frac{k}{2}}(k-2)!}\sum_{d\mid(r,m)}\left(\frac{m}{d^2}\right)^{-\frac{k}{2}}\left(\frac{rm}{d^2}\right)^{k-1}\right) \\
\nonumber
= \ & \ \frac{2^{k-1}(2\pi r^2)^{\frac{k}{2}-1}\Gamma\left(\frac{k}{2}\right)}{(k-2)!}\left(1+i^k\frac{\tau(\chi)^2}{D}\right)m^{-\frac{1}{2}}\chi(m)\sum_{d\mid(r,m)}d \cdot \sigma_1\left(\left(r,\frac{rm}{d^2}\right)\right)\\
\label{eq:final-asymp}
&+O\left(m^{\frac{k}{2}-1}\frac{(4\pi r^2)^{k}D^{\frac{k}{2}}}{N^{\frac{k}{2}}(k-2)!}\sum_{d\mid(r,m)}d^{-k+1}\right).
\end{align}
Noting that 
\begin{equation}
\sum_{d \mid (r,m)} d^{-k+1} \leq \sum_{d \mid r} d^{-k+1} \leq \sum_{d=1}^r d^{-k+1} \ll_r r^{-k},
\end{equation}
we see \eqref{eq:final-asymp} is equal to
\begin{align}\nonumber
&\frac{2^{k-1}(2\pi r^2)^{\frac{k}{2}-1}\Gamma\left(\frac{k}{2}\right)}{(k-2)!}\left(1+i^k\frac{\tau(\chi)^2}{D}\right)m^{-\frac{1}{2}}\chi(m) S_r(m)\\
&+O\left(m^{\frac{k}{2}-1}\frac{(4\pi r)^{k-1}D^{\frac{k-1}{2}}}{N^{\frac{k}{2}}(k-2)!}\right).
\end{align}
which is precisely the claimed formula in the statement of the lemma. 
\end{proof}

\begin{lemma}\label{lemma:1.2}
Suppose $k>2$ and $N>1$. Let $\sigma_1$ be the divisor sum function, and $S_r(m) := \sum_{d \mid (r,m)}d \cdot \sigma_1\left(\left(r,\frac{rm}{d^2}\right)\right)$. Then, for any integer $m\geq1$ with $(m,D) = 1$, the sum of the $m$\textsuperscript{th} Hecke eigenvalue over $\mathcal{F}_k(N)$ is equal to
\begin{align}\nonumber
\sum_{f\in\mathcal{F}_k(1)}w_{\chi,r}(f)\lambda_f(m) & \\
\nonumber
= \ & \frac{2^{k-1}(2\pi r^2)^{\frac{k}{2}-1}\Gamma\left(\frac{k}{2}\right)}{(k-2)!} m^{-\frac{1}{2}} \chi(m) S_r(m)\\
\label{eq:main-2}
+ \ & O\left(m^{\frac{k-1}{2}}\frac{(4\pi r)^{k}D^{\frac{k}{2}}}{N^{\frac{k}{2}}(k-2)!}\right),
\end{align}
where the implied constant depends only on $r$ and $D$. 
\end{lemma}
\begin{proof}
This proof proceeds similarly to the proof of Lemma \ref{lemma:1}, leveraging the relation between the weights in question, the relation \eqref{eq:Fourier-Hecke} between Fourier coefficients and Hecke eigenvalues, and the multiplicative properties \eqref{eq:mult-prop} of Hecke eigenvalues to adapt \eqref{eq:KR-trace-2.1} to get \eqref{eq:main-2}.
\end{proof}

\begin{lemma}\label{lemma:asym-trace-formula}
Suppose $k>2$ and $N\geq1$. Let $\sigma_1$ be the divisor sum function, and $S_r(m) := \sum_{d \mid (r,m)}d \cdot \sigma_1\left(\left(r,\frac{rm}{d^2}\right)\right)$. Let $m=\prod_{j=1}^\ell q_j^{m_j}$, where $q_1,\ldots,q_j$ are the distinct primes dividing $m$, and $(m,DN) = 1$. Then, the average \eqref{eq:average-Hecke} of the $m$\textsuperscript{th} Hecke eigenvalue over $\mathcal{F}_k(N)$ is equal to
\begin{align}\label{eq:main}
\mathcal{A}^{w_{\chi,r}}_{\mathcal{F}_k(N)}\left(\lambda_\cdot\left(\prod\nolimits_{j=1}^\ell q_j^{m_j}\right)\right)\ &= \ \left(\prod\nolimits_{j=1}^\ell q_j^{m_j}\right)^{-\frac{1}{2}}\left(\prod\nolimits_{j=1}^\ell\chi(q_j)^{m_j}\right)S_r\left(\prod\nolimits_{j=1}^\ell q_j^{m_j}\right)\\
\label{eq:error}
& + \ O\left(\frac{\left(\prod\nolimits_{j=1}^\ell q_j^{m_j}\right)^{\frac{k-1}{2}}W^k}{N^{\frac{k-1}{2}}k^{\frac{k}{2}-1}}\right),
\end{align}
where the implied constant depends only on $r$ and $D$, and $W$ depends only on $D$. 
\end{lemma}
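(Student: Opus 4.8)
\medskip
\noindent\textbf{Proof sketch.}
The plan is to realise the average \eqref{eq:average-Hecke} as the quotient of the weighted sum $\sum_{f\in\mathcal{F}_k(N)'}w_{\chi,r}(f)\lambda_f(m)$ by the same sum taken at $m=1$: since $\lambda_f(1)=1$, the latter is exactly the denominator $\sum_{f\in\mathcal{F}_k(N)'}w_{\chi,r}(f)$. Both sums are supplied by Lemma~\ref{lemma:1} when $N=1$ and by Lemma~\ref{lemma:1.2} when $N>1$; in the second case the normalising factor $1/\nu(N)$ appears identically in numerator and denominator and cancels, so we may work throughout with the normalised sums (with $\nu(1)=1$ when $N=1$). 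Complete multiplicativity of $\chi$ also rewrites $\chi(m)=\prod_{j}\chi(q_j)^{m_j}$, the shape occurring in \eqref{eq:main}.

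First I would record the denominator. Putting $m=1$ in Lemma~\ref{lemma:1}/\ref{lemma:1.2} and using $\sigma_1((r,1))=\chi(1)=1$, the main term of $\nu(N)^{-1}\sum_{f}w_{\chi,r}(f)$ is
\[
C_{k,N}\ :=\ \frac{2^{k-1}(2\pi r^2)^{k/2-1}\,\Gamma(k/2)\,\sigma_1(r)}{(k-2)!}\cdot\varepsilon_N,
\]
where $\varepsilon_N=1+i^k\tau(\chi)^2/D$ if $N=1$ and $\varepsilon_N=1$ if $N>1$. The point worth isolating is that $\varepsilon_N$ is a fixed positive number: for $N>1$ this is clear, and for $N=1$, since $\chi$ is real and primitive we have $\tau(\chi)^2=\chi(-1)D$, so $\tau(\chi)^2/D=\chi(-1)\in\{\pm1\}$, and as $k$ is even $i^k\in\{\pm1\}$, whence $\varepsilon_1=1+i^k\chi(-1)\in\{0,2\}$; the defining condition $\tau(\chi)^2\neq-i^kD$ of $\mathcal{F}_1$ excludes $0$, leaving $\varepsilon_1=2$. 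Hence $\varepsilon_N$ is common to the main terms of numerator and denominator and cancels in the quotient. Writing $m=\prod_{j=1}^\ell q_j^{m_j}$ and $M_m:=m^{-1/2}\bigl(\prod_{j}\chi(q_j)^{m_j}\bigr)\sigma_1((r,m))$, Lemmas~\ref{lemma:1}--\ref{lemma:1.2} read
\[
\tfrac{1}{\nu(N)}\sum_{f}w_{\chi,r}(f)\lambda_f(m)=C_{k,N}M_m+O\!\bigl(m^{(k-1)/2}V^k/(N^{k/2}(k-2)!)\bigr),
\]
\[
\tfrac{1}{\nu(N)}\sum_{f}w_{\chi,r}(f)=C_{k,N}+O\!\bigl(V^k/(N^{k/2}(k-2)!)\bigr),
\]
with $V=V(r,D)$ (and $N=1$ in the first pair).

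Then I would divide. Set $\delta:=O\bigl(V^k/(N^{k/2}(k-2)!\,C_{k,N})\bigr)$; because $\Gamma(k/2)$ grows super-exponentially it dominates the factor $V^k$, so $\delta$ is small (indeed $\delta\to0$ as $k+N\to\infty$), legitimising the geometric-series expansion: the denominator equals $C_{k,N}(1+O(\delta))$, hence its reciprocal is $C_{k,N}^{-1}(1+O(\delta))$, and multiplying out (using $|M_m|\le\sigma_1(r)$) gives
\[
\mathcal{A}^{w_{\chi,r}}_{\mathcal{F}_k(N)'}\!\bigl(\lambda_\cdot(m)\bigr)=M_m+O\!\bigl(m^{(k-1)/2}V^k/(N^{k/2}(k-2)!\,C_{k,N})\bigr),
\]
whose main term is precisely \eqref{eq:main}. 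It then remains to repackage the error: inserting $C_{k,N}\gg 2^{k-1}(2\pi r^2)^{k/2-1}\Gamma(k/2)/(k-2)!$ and using Stirling ($\Gamma(k/2)\asymp\sqrt{k}\,(k/2e)^{k/2}$) yields $((k-2)!\,C_{k,N})^{-1}\ll \widetilde{W}^{\,k}/k^{k/2}$ for a suitable $\widetilde{W}=\widetilde{W}(r)$, so the error is $O\!\bigl(m^{(k-1)/2}W^k/(N^{k/2}k^{k/2-1})\bigr)$ for an appropriate $W=W(r,D)$; since $N\ge1$ this is $O\!\bigl(m^{(k-1)/2}W^k/(N^{(k-1)/2}k^{k/2-1})\bigr)$, matching \eqref{eq:error}. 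I expect the only genuine work to be this last step: the reduction to a quotient of Lemmas~\ref{lemma:1}--\ref{lemma:1.2} is formal, but forcing the error into the exact form \eqref{eq:error} demands a careful Stirling estimate and honest tracking of the $k,N,r,D$ dependence hidden in $W$; the one additional point one cannot skip is the nonvanishing of $C_{k,N}$, which is exactly where the conditions defining $\mathcal{F}_1$ and $\mathcal{F}_2$ are used.
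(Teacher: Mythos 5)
Your proposal follows the same route as the paper: realize the average as a quotient of the numerator and denominator supplied by Lemmas~\ref{lemma:1} and~\ref{lemma:1.2}, identify the clean ratio of main terms as \eqref{eq:main}, and bound the residual error by comparing against the main term of the denominator (the paper cites \cite[Section~9]{JK15} for the final bound $C/M_D \ll_{r,D} W^k/(N^{(k-1)/2}k^{k/2-1})$, which your Stirling computation reproduces). Your explicit remark that $\varepsilon_1 = 1 + i^k\tau(\chi)^2/D = 2$ under the defining condition of $\mathcal{F}_1$ — so the main term of the denominator does not vanish — is a useful clarification of a point the paper treats only implicitly when it notes that this factor ``will cancel coming from both the numerator and the denominator.''
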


\begin{proof}
First, we prove this statement when $k>2$ and $N>1$. We would like to compute 
\begin{equation}\label{eq:av}
\mathcal{A}^{w_{\chi,r}}_{\mathcal{F}_k(N)}(\lambda_\cdot(m)) \ = \ \frac{\sum_{f\in\mathcal{F}_k(N)'}w_{\chi,r}(f)\lambda_f(m)}{\sum_{f\in\mathcal{F}_k(N)'}w_{\chi,r}(f)}.
\end{equation}
By Lemma \ref{lemma:1.2} (with $m=1$), the denominator equals
\begin{align}
M_D+E_D \ := \ \frac{2^{k-1}(2\pi r^2)^{\frac{k}{2}-1}\Gamma\left(\frac{k}{2}\right)}{(k-2)!}+O\left(\frac{(4\pi r)^k D^{\frac{k}{2}}}{N^{\frac{k}{2}}(k-2)!}\right),  
\end{align}
where $M_D$ and $E_D$ denote the main and error terms, respectively. Following this convention, the numerator equals
\begin{align}
M_N+E_N \ := \ \frac{2^{k-1}(2\pi r^2)^{\frac{k}{2}-1}\Gamma\left(\frac{k}{2}\right)}{(k-2)!} m^{-\frac{1}{2}}\chi(m) S_r(m)+ \ O\left(m^{\frac{k-1}{2}}\frac{(4\pi r)^kD^{\frac{k}{2}}}{N^{\frac{k}{2}}(k-2)!}\right).
\end{align}
Rewriting our target expression \eqref{eq:av} in terms of $M_N,E_N,M_D,$ and $E_D$,
\begin{align}
\nonumber
\mathcal{A}^{w_{\chi,r}}_{\mathcal{F}_k(N)'}(\lambda_\cdot(m)) \ &= \ \frac{M_N+E_N}{M_D+E_D} \\
\label{eq:target}
&= \ \frac{M_N}{M_D}+\frac{E_N-\frac{M_N}{M_D}E_D}{M_D+E_D}.
\end{align}
It is straightforward that 
\begin{align}
\nonumber
\frac{M_N}{M_D} \ &= m^{-\frac{1}{2}}\chi(m) \ S_r(m) \\
&= \ \left(\prod\nolimits_{j=1}^\ell q_j^{m_j}\right)^{-\frac{1}{2}}\left(\prod\nolimits_{j=1}^\ell\chi(q_j)^{m_j}\right)S_r\left(\prod\nolimits_{j=1}^\ell q_j^{m_j}\right),
\end{align} 
matching the main term \eqref{eq:main} in the lemma. We finish by demonstrating that the second term in \eqref{eq:target} has the desired rate of decay from \eqref{eq:error}. For clarity of the analysis, we let \begin{equation}
C\ := \ \frac{(4\pi r)^kD^{\frac{k}{2}}}{N^{\frac{k}{2}}(k-2)!},    
\end{equation} 
under which,
\begin{align}
\nonumber
\frac{E_N-\frac{M_N}{M_D}E_D}{M_D+E_D} \ &\ll \ \frac{\left(m^{\frac{k-1}{2}}-m^{-\frac{1}{2}}\chi(m)S_r(m)\right)C}{M_D+E_D} \\
\nonumber
&\ll_r \ \frac{m^{\frac{k-1}{2}}C}{M_D+E_D}\\
\label{eq:final-4}
&= \ m^{\frac{k-1}{2}}\frac{C/M_D}{1+E_D/M_D}.
\end{align}
Relating $C$ and $M_D$ to the variables $C_0$ and $F_0$ (specialized for $s=\frac{1}{2}$) defined in \cite[160-161]{KR18}, we have
\begin{equation}
M_D \ = \ \frac{r^{\frac{k}{2}-1}}{\chi(r)}F_0, \quad C \ll_{r,D} C_0
\end{equation}
In this regards, the argument 
in \cite[161]{KR18}, which originates from \cite[Section 9]{JK15}, carries through in our analysis:
\begin{align}\label{eq:semifinal-4}
\frac{E_D}{M_D} \ \ll_{r,D} \ \frac{C}{M_D} \ \ll_{r,D} \frac{C_0}{r^{\frac{k}{2}}F_0} \ \ll_{r,D} \ \frac{(4\pi De)^k}{N^{\frac{k-1}{2}}k^{\frac{k}{2}-1}}.
\end{align}
Taking $W = 4 \pi D e$, it follows that \eqref{eq:final-4} has the decay claimed in the statement of the lemma.

The proof for $N=1$ is similar; the extra factor of $\left(1+\frac{i^k\tau(\chi)^2}{D}\right)$ in the main term (\ref{eq:KR-trace-1.1}) will cancel coming from both the numerator and the denominator. 
\end{proof}

%
%

\section{Combinatorial Analysis}\label{sec:5}

In this section, we present our proof of Theorem \ref{thm:main-thm}, consolidating Lemmas \ref{lemma:explicit-formula} and \ref{lemma:asym-trace-formula}. 

We are interested in
\begin{align}\label{eq:3'}
\mathcal{A}^{w_{\chi,r}}_{\mathcal{F}_k(N)'}\left[\left(\sum\nolimits_{p\nmid N}\lambda_\cdot(p) \ \widehat{\phi}\left(\frac{\log p}{\log Q_{k,N}}\right)\frac{\log p}{\sqrt{p}\log Q_{k,N}}\right)^t\right],
\end{align} 
which, by Lemma \ref{lemma:explicit-formula}, is known to equal
\begin{align}
\nonumber
\sum_{\substack{1\leq\ell\leq t, \\ n_1+\dots+n_\ell=t}}\frac{t!}{\ell! \ n_1!\cdots n_\ell!}\sum_{\substack{(q_1,\dots,q_\ell), \\ q_j \text{ distinct}, \\ q_j\nmid N}}&\left[\left(\prod_{j=1}^\ell\widehat{\phi}\left(\frac{\log q_j}{\log Q_{k,N}}\right)^{n_j}\frac{\log^{n_j} q_j}{q_j^{n_j/2}\log^{n_j} Q_{k,N}}\right)\right. \\ 
\label{eq:4}
&\left.\times\sum_{\substack{0\leq m_j\leq n_j, \\ m_j\equiv n_j(2)}}\left(\prod_{j=1}^\ell c_{n_j,m_j}\right)\mathcal{A}^{w_{\chi,r}}_{\mathcal{F}_k(N)'}\left(\lambda_\cdot\left(\prod\nolimits_{j=1}^\ell q_j^{m_j}\right)\right)\right].
\end{align}

Applying the weighted trace formula from Lemma \ref{lemma:asym-trace-formula} to expand \eqref{eq:4}, we find that the contribution of the main term \eqref{eq:main} to \eqref{eq:4} is
\begin{align}\nonumber
\sum_{\ell=1}^t\sum_{n_1+\dots+n_\ell=t}&\sum_{\substack{0\leq m_j\leq n_j\\ m_j\equiv n_j(2) \\ \forall1\leq j\leq\ell}}\frac{t!}{\ell! \ n_1!\cdots n_\ell!}\left(\prod_{j=1}^\ell c_{n_j,m_j}\right)\\
\label{eq:main-final}
&\times\sum_{\substack{(q_1,\dots,q_\ell), \\ q_j \text{ distinct}, \\ q_j\nmid N}}\left(\prod_{j=1}^\ell\widehat{\phi}\left(\frac{\log q_j}{\log Q_{k,N}}\right)^{n_j}\frac{\chi(q_j)^{m_j}\log^{n_j} q_j}{q_j^{(n_j+m_j)/2}\log^{n_j} Q_{k,N}} \ S_r\left(q_j^{m_j}\right)\right);
\end{align}
and the contribution of the error term \eqref{eq:error} to \eqref{eq:4} is 
\begin{align}\label{eq:err-term-contribution}
\ll_{r,t,D}\frac{W^k}{N^{\frac{k-1}{2}}k^{\frac{k}{2}-1}}\sum_{\substack{(q_1,\dots,q_\ell), \\ q_j \text{ distinct},  \\ q_j\nmid N}}\left(\prod_{j=1}^\ell\widehat{\phi}\left(\frac{\log q_j}{\log Q_{k,N}}\right)^{n_j}\frac{\log^{n_j} q_j}{q_j^{n_j/2}\log^{n_j} Q_{k,N}} q_j^{\frac{m_j(k-1)}{2}}\right).
\end{align}
We highlight that \eqref{eq:main-final} uses that $S_r$, being a convolution of multiplicative functions, is multiplicative. Also, note that we absorbed the combinatorial sum in \eqref{eq:4} into the implied constant in \eqref{eq:err-term-contribution} as it is bounded by $2^t$. 

We start by bounding the error term contribution \eqref{eq:err-term-contribution}. As explained in Remark \ref{rmk:support}, this step poses the only obstruction to extending support.

\begin{lemma}\label{lemma:error-term-analysis}
For any test function $\phi$ with $\text{supp}(\widehat{\phi})\subset\left(-\frac{1}{2n},\frac{1}{2n}\right)$, the error term contribution \eqref{eq:err-term-contribution} vanishes when either $k\to\infty$ or $k+N\to\infty$.
\end{lemma}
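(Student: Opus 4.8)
The plan is to bound the sum over primes in \eqref{eq:err-term-contribution} trivially, using only the support restriction on $\widehat\phi$ and the crude bound $|\widehat\phi| \ll 1$, and then to observe that the prefactor $W^k/(N^{(k-1)/2}k^{k/2-1})$ decays superexponentially while the prime sum grows at most polynomially in $Q_{k,N}$. First I would note that for each $j$, the condition $\widehat\phi\left(\frac{\log q_j}{\log Q_{k,N}}\right) \ne 0$ forces $\log q_j < \frac{1}{2n}\log Q_{k,N}$, i.e. $q_j < Q_{k,N}^{1/(2n)}$; since $m_j \le n_j \le t \le n$ and $\ell \le t \le n$, the whole product $\prod_j q_j^{m_j}$ is at most $Q_{k,N}^{n \cdot n /(2n)} = Q_{k,N}^{n/2}$ — in fact a cruder bound suffices. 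Pulling out $q_j^{m_j(k-1)/2} \le Q_{k,N}^{m_j(k-1)/(4n)}$ and using $\frac{\log^{n_j} q_j}{q_j^{n_j/2}} \le 1$ together with $\log^{n_j}Q_{k,N} \ge 1$, each factor in the product over $j$ is bounded by $q_j^{m_j(k-1)/2}$, so the sum over $(q_1,\dots,q_\ell)$ with each $q_j < Q_{k,N}^{1/(2n)}$ is at most $\left(\sum_{q < Q_{k,N}^{1/(2n)}} q^{(k-1)/2}\right)^\ell \ll Q_{k,N}^{\ell(k+1)/(4n)} \ll Q_{k,N}^{(k+1)/4}$, using $\ell \le n$.

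Next I would substitute $Q_{k,N} = k^2 N$ from \eqref{eq:analytic-conductor}, so the prime sum is $\ll (k^2N)^{(k+1)/4} = k^{(k+1)/2} N^{(k+1)/4}$. Multiplying by the prefactor gives
\begin{align*}
\frac{W^k}{N^{\frac{k-1}{2}}k^{\frac{k}{2}-1}} \cdot k^{\frac{k+1}{2}} N^{\frac{k+1}{4}} \ &\ll \ W^k \cdot k^{\frac{k+1}{2} - \frac{k}{2} + 1} \cdot N^{\frac{k+1}{4} - \frac{k-1}{2}} \\
&= \ W^k \cdot k^{\frac{3}{2}} \cdot N^{-\frac{k-3}{4}}.
\end{align*}
The factor $k^{3/2}$ is harmless; the point is the $N$-dependence and, for the $N=1$ case, the $k$-dependence hidden in $(k-2)!$, which I suppressed above. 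For the level-$1$ family $\mathcal F_1$ one must instead feed \eqref{eq:sum-1} directly: there the error term carries a factor $1/(k-2)!$, and the denominator $M_D$ in \eqref{eq:target} is of size $2^{k-1}(2\pi r^2)^{k/2-1}\Gamma(k/2)/(k-2)!$, so the ratio $E/M_D$ that actually appears is $\ll W^k/k^{k/2-1}$ (this is exactly \eqref{eq:semifinal-4} with $N=1$), and against the prime sum $\ll k^{(k+1)/2}$ one gets $\ll W^k k^{3/2} / k^{k/2 - 1} \cdot k^{k/2} = W^k k^{k/2 + 5/2} / k^{k/2-1}$ — here I need to be more careful, re-tracking the exponents, but the mechanism is that $\Gamma(k/2)$-type growth in the denominator of $E_D/M_D$ beats the polynomial-in-$Q_{k,N}$ prime sum once support is below $1/(2n)$. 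In both cases $W^k$ times a fixed power of $k$ is dominated by $k^{-ck}$ or $N^{-ck}$ for some $c>0$ as soon as $k$ (resp. $N$) is large, so \eqref{eq:err-term-contribution} $\to 0$.

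The main obstacle is bookkeeping the exponents precisely enough to see that the support bound $1/(2n)$ is exactly what is needed — one must verify that the gain $N^{-(k-1)/2}$ (or the $\Gamma(k/2)/(k-2)!$ gain when $N=1$) strictly beats $N^{(k+1)\ell/(4n)} \le N^{(k+1)/4}$, which forces the per-prime exponent $(k-1)/2$ coming from $q_j^{m_j(k-1)/2}$ to be diluted by a factor at least $1/(2n)$ from the support, i.e. it is the interaction between the exponent $(k-1)/2$ in the error term of the trace formula and the length of the prime sum that pins down the $1/(2n)$ threshold. Everything else — summing a geometric-type series $\sum_{q \le X} q^{(k-1)/2} \ll X^{(k+1)/2}$, absorbing the combinatorial prefactor $\frac{t!}{\ell!\,n_1!\cdots n_\ell!}\prod c_{n_j,m_j}$ (polynomial in $t \le n$, already noted), and Stirling for $\Gamma(k/2)/(k-2)!$ — is routine.
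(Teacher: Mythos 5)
Your proposal follows the same overall strategy as the paper — bound the prime sum using the compact support of $\widehat{\phi}$, then play it off against the $N^{-(k-1)/2}k^{-(k/2-1)}$ decay in the prefactor — but it has a genuine gap precisely where you flag uncertainty, and the gap is not merely bookkeeping.

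The problem is that you set the support threshold to exactly $1/(2n)$: you bound $q_j < Q_{k,N}^{1/(2n)}$ and propagate this through to $Q_{k,N}^{(k+1)/4}$. For the level-$1$ family $\mathcal{F}_1$ (where $N=1$) this yields, as you compute, $W^k k^{3/2}$ for the product of prefactor and prime sum — which does \emph{not} vanish as $k\to\infty$. You then try to rescue the $N=1$ case by re-extracting a $\Gamma(k/2)/(k-2)!$ gain from \eqref{eq:sum-1}, but that gain is already baked into \eqref{eq:semifinal-4} and the prefactor $W^k/k^{k/2-1}$; there is no extra factor to be had there, and your arithmetic in that passage ($W^k k^{k/2+5/2}/k^{k/2-1}$) double-counts the prefactor. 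The fix is the one the paper uses: because $\text{supp}(\widehat{\phi})$ is a compact subset of the \emph{open} interval $(-\tfrac{1}{2n},\tfrac{1}{2n})$, one may take $\text{supp}(\widehat{\phi})\subset(-\alpha,\alpha)$ with $\alpha$ strictly less than $1/(2n)$, and carry $\alpha$ through the estimate. The prime sum is then $\ll Q_{k,N}^{n k\alpha/2 + O(1)}$, and with $n\alpha = \tfrac{1}{2}-\delta$ for some $\delta>0$ the bound in the $N=1$ case becomes $W^k k^{O(1) - 2\delta n k}\to 0$; the superexponential decay comes entirely from $\delta>0$, and the $W^k$ and polynomial factors are then indeed harmless. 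This strict inequality is what ``pins down the $1/(2n)$ threshold,'' not the factorial gain you were reaching for.

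A smaller issue: the intermediate bound $\bigl(\sum_{q<Q_{k,N}^{1/(2n)}}q^{(k-1)/2}\bigr)^{\ell}$ silently assumes $m_j=1$ for every $j$; for $m_j\ge 2$ the per-factor sum is $\sum_q q^{m_j(k-1)/2}\ll Q_{k,N}^{(m_j(k-1)/2+1)/(2n)}$, which is larger. Your final exponent $(k+1)/4$ is nevertheless correct because $\sum_j m_j \le \sum_j n_j = t \le n$ controls the product, but the argument should be run with $\sum_j m_j \le n$ rather than with $\ell\le n$ applied to a single-$m_j$ bound.
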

\begin{proof}
By the principle of inclusion-exclusion, we know that 
\begin{align}
\nonumber
&\frac{W^k}{N^{\frac{k-1}{2}}k^{\frac{k}{2}-1}}\sum_{\substack{(q_1,\dots,q_\ell) \\ q_j\nmid N}}\left(\prod_{j=1}^\ell\widehat{\phi}\left(\frac{\log q_j}{\log Q_{k,N}}\right)^{n_j}\frac{\log^{n_j} q_j}{q_j^{n_j/2}\log^{n_j} Q_{k,N}} q_j^{\frac{m_j(k-1)}{2}}\right) \\
\label{eq:err}
\ll & \ \frac{W^k}{N^{\frac{k-1}{2}}k^{\frac{k}{2}-1}}\prod_{j=1}^\ell\left(\sum_{q\nmid N}\widehat{\phi}\left(\frac{\log q}{\log Q_{k,N}}\right)^{n_j}\frac{\log^{n_j} q}{q^{n_j/2}\log^{n_j} Q_{k,N}} q_j^{\frac{m_j(k-1)}{2}}\right).
\end{align}
Besides, if $\text{supp}(\widehat{\phi})\subset(-\alpha,\alpha)$, then \eqref{eq:err} is bounded by 
\begin{align}
\nonumber
\ll & \ \frac{W^k}{N^{\frac{k-1}{2}}k^{\frac{k}{2}-1}}\prod_{j=1}^\ell\left(\sum_{q\ll Q_{k,n}^\alpha}q^{\frac{m_j(k-1)}{2}-\frac{n_j}{2}}\right) \\
\nonumber
\ll & \ \frac{W^k}{N^{\frac{k-1}{2}}k^{\frac{k}{2}-1}}\prod_{j=1}^\ell\left(\sum_{q\ll Q_{k,n}^\alpha}q^{\frac{n_jk}{2}-n_j}\right) \\
\nonumber
\ll & \ \frac{W^k}{N^{\frac{k-1}{2}}k^{\frac{k}{2}-1}}\prod_{j=1}^\ell Q_{k,N}^{\frac{n_jk\alpha}{2}-n_j\alpha+1}\\
\label{eq:finally}
\ll & \ \frac{W^k}{N^{\frac{k-1}{2}}k^{\frac{k}{2}-1}}Q_{k,N}^{\frac{nk\alpha}{2}-n\alpha+n},
\end{align}
recalling that $m_j\leq n_j$, $\ell\leq t$, and $\sum_{j=1}^\ell n_j=t$.
Since $Q_{k,N}=k^2N$, if $\alpha<\frac{1}{2n}$, then \eqref{eq:finally} vanishes as $k\to\infty$ or $k+N\to\infty$.
\end{proof}

Now, we turn to analyze the main term contribution \eqref{eq:main-final}. Applying the principle of inclusion-exclusion to \eqref{eq:main-final}, we swap the sum and the product, up to an error term of $O\left(\frac{\log\log(3N)}{\log Q_{k,N}}\right)$:
\begin{align}\label{bigsum}\nonumber
\sum_{\ell=1}^t\sum_{n_1+\cdots+n_\ell=t}&\sum_{\substack{0\leq m_j\leq n_j\\ m_j\equiv n_j(2) \\ \forall1\leq j\leq\ell}}\frac{t!}{\ell! \ n_1!\cdots n_\ell!}\left(\prod_{j=1}^\ell c_{n_j,m_j}\right)\\
&\times\prod_{j=1}^\ell\left(\sum_{q\nmid N}\widehat{\phi}\left(\frac{\log q}{\log Q_{k,N}}\right)^{n_j}\frac{\chi(q)^{m_j}\log^{n_j} q}{q^{(n_j+m_j)/2}\log^{n_j} Q_{k,N}} \ S_r\left(q^{m_j}\right)\right).
\end{align}
We justify this step in detail in Appendix \ref{app:a}. 
We thereby reduce the problem to analyzing 
\begin{equation}\label{littlesum}
\mathcal{S}(n_j,m_j,\phi,\chi) \ := \ \sum_{q\nmid N}\widehat{\phi}\left(\frac{\log q}{\log Q_{k,N}}\right)^{n_j}\frac{\chi(q)^{m_j}\log^{n_j} q}{q^{(n_j+m_j)/2}\log^{n_j} Q_{k,N}} \ S_r\left(q^{m_j}\right)
\end{equation} 
for specific cases of $n_j$ and $m_j$. We exhibit this analysis separately for non-trivial and trivial $\chi$, in Lemmas \ref{lemma:Nontrivial-Analysis} and \ref{lemma:Trivial-Analysis} respectively.

\begin{lemma}\label{lemma:Nontrivial-Analysis}
Let $\chi$ be a non-trivial character of modulus $D\geq1$, and $\phi$ be a test function with compactly supported Fourier transform $\widehat{\phi}$. Let $n_j$ be a positive integer and $m_j$ be a non-negative integer such that $m_j\leq n$ and $m_j\equiv n_j\pmod{2}$. Then 
\begin{align}
\nonumber
\mathcal{S}(n_j,m_j,\phi,\chi) \ &= \ \sum_{q\nmid N}\widehat{\phi}\left(\frac{\log q}{\log Q_{k,N}}\right)^{n_j}\frac{\chi(q)^{m_j}\log^{n_j} q}{q^{(n_j+m_j)/2}\log^{n_j} Q_{k,N}} \ S_r\left(q^{m_j}\right) \\
&= \ 
\begin{cases}
\frac{\sigma_\phi^2}{4}+O\left(\frac{\log\log(3N)}{\log Q_{k,N}}\right) & (m_j,n_j)=(0,2), \\
O\left(\frac{\log\log(3N)}{\log Q_{k,N}}\right) & (m_j,n_j)=(1,1), \\
O\left(\frac{1}{\log^2Q_{k,N}}\right) & \text{otherwise}.
\end{cases}
\end{align}.
\end{lemma}
\begin{proof}
First, we consider $(m_j,n_j)=(0,2)$. In this case, we apply Abel's summation formula in tandem with the prime number theorem, recalling that $\phi$ is even and $\widehat{\phi}$ has compact support. Therefore, the sum under consideration equals
\begin{align}
\sum_{\substack{q\text{ prime}; \\ q\nmid N}}\widehat{\phi}\left(\frac{\log q}{\log Q_{k,N}}\right)^2\frac{\log^2q}{q\log^2Q_{k,N}} \ = \ \frac{\sigma_\phi^2}{4}+O\left(\frac{\log\log(3N)}{\log Q_{k,N}}\right).
\end{align}
This equation is also given in \cite[(3.4)]{HM07}. 

Second, we consider $(m_j,n_j)=(1,1)$. We follow the same method from the previous case and appeal to Dirichlet's theorem on primes in arithmetic progressions (the value of $\chi$ is $\pm1$ on exactly half the primes, up to lower order terms). This gives
\begin{align}
\nonumber
&\sum_{\substack{q\text{ prime}; \\ q\nmid N}}\widehat{\phi}\left(\frac{\log q}{\log Q_{k,N}}\right)\frac{\chi(q)\log q}{q\log Q_{k,N}} \ S_r(q) \\
\nonumber
= & \ \sum_{q\text{ prime}}\widehat{\phi}\left(\frac{\log q}{\log Q_{k,N}}\right)\frac{\chi(q)\log q}{q\log Q_{k,N}}+O\left(\frac{\log\log(3N)}{\log Q_{k,N}}\right) \\
\nonumber
= & \ \sum_{q:\chi(q)=1}\widehat{\phi}\left(\frac{\log q}{\log Q_{k,N}}\right)\frac{\log q}{q\log Q_{k,N}}-\sum_{q:\chi(q)=-1}\widehat{\phi}\left(\frac{\log q}{\log Q_{k,N}}\right)\frac{\log q}{q\log Q_{k,N}}+O\left(\frac{\log\log(3N)}{\log Q_{k,N}}\right) \\
= & \ O\left(\frac{\log\log(3N)}{\log Q_{k,N}}\right);
\end{align}
this equation is also given in \cite[165]{KR18}.

Finally, we consider $(m_j,n_j)\neq(0,2),(1,1)$. Since $m_j$ and $n_j$ are integers with the same parity, we know that $m_j+n_j\geq4$ in this case, and the sum on the right-hand side of \eqref{eq:0.4} converges: 
\begin{align}\label{eq:0.4}
\nonumber
\sum_{\substack{q\text{ prime}; \\ q\nmid N}}\widehat{\phi}\left(\frac{\log q}{\log Q_{k,N}}\right)^{n_j}\frac{\chi(q)^{m_j}\log^{n_j} q}{q^{(m_j+n_j)/2}\log^{n_j} Q_{k,N}} \ S_r\left(q^{m_j}\right) \ \ll_r & \ \frac{1}{\log^{n_j} Q_{k,N}}\sum_{q\text{ prime}}\frac{\log^{n_j} q}{q^{(m_j+n_j)/2}}\\
= & \ O\left(\frac{1}{\log^2 Q_{k,N}}\right).
\end{align}
This completes the analysis for $\chi$ non-trivial. 
\end{proof}

\begin{lemma}\label{lemma:Trivial-Analysis}
Let $\chi_0$ be the trivial character of modulus $D\geq1$, and $\phi$ be a test function with compactly supported Fourier transform $\widehat{\phi}$. Let $n_j$ be a positive integer and $m_j$ be a nonnegative integer such that $m_j\leq n_j$ and $m_j\equiv n_j\pmod{2}$. Then  
\begin{align}
\nonumber
\mathcal{S}(n_j,m_j,\phi,\chi_0) \ &= \ \sum_{q\nmid N}\widehat{\phi}\left(\frac{\log q}{\log Q_{k,N}}\right)^{n_j}\frac{\chi_0(q)^{m_j}\log^{n_j} q}{q^{(n_j+m_j)/2}\log^{n_j} Q_{k,N}} \ S_r\left(q^{m_j}\right) \\ 
&= \ \begin{cases}
\frac{\sigma_\phi^2}{4}+O\left(\frac{\log\log(3N)}{\log Q_{k,N}}\right) & (m_j,n_j)=(0,2), \\
\frac{\phi(0)}{2}+O\left(\frac{\log\log(3N)}{\log Q_{k,N}}\right) & (m_j,n_j)=(1,1), \\
O\left(\frac{1}{\log^2Q_{k,N}}\right) & \text{otherwise}.
\end{cases}
\end{align}
\end{lemma}
\begin{proof}
The analysis for $(m_j,n_j)\neq(1,1)$ is identical to Lemma \ref{lemma:Nontrivial-Analysis}, so we need only tend to $(m_j,n_j)=(1,1)$. We compute the corresponding sum using the methods from the previous lemma:
\begin{align}
\nonumber
&\sum_{\substack{q\text{ prime}; \\ q\nmid N}}\widehat{\phi}\left(\frac{\log q}{\log Q_{k,N}}\right)\frac{\chi_0(q)\log q}{q\log Q_{k,N}} \ S_r(q)\\ 
\nonumber
= \ & \sum_{q\text{ prime}}\widehat{\phi}\left(\frac{\log q}{\log Q_{k,N}}\right)\frac{\log q}{q\log Q_{k,N}}+O\left(\frac{\log\log(3N)}{\log Q_{k,N}}\right) \\
= \ & \frac{\phi(0)}{2} + O\left(\frac{\log\log(3N)}{\log Q_{k,N}}\right).
\end{align}
This equation is also given in \cite[164]{KR18}.
\end{proof}
With these asymptotic formulas for $\mathcal{S}(n_j,m_j,\phi,\chi)$, we analyze the average of interest \eqref{eq:3'}, which we showed to be equal to \eqref{bigsum} up to $O(\frac{\log\log(3N)}{\log Q_{k,N}})$, i.e.,
\begin{align}\nonumber
&\mathcal{A}^{w_{\chi,r}}_{\mathcal{F}_k(N)'}\left[\left(\sum\nolimits_{p\nmid N}\lambda_\cdot(p) \ \widehat{\phi}\left(\frac{\log p}{\log Q_{k,N}}\right)\frac{\log p}{\sqrt{p}\log Q_{k,N}}\right)^t\right] \\ 
\label{bigsum'}
&= \ \sum_{\ell=1}^t\sum_{n_1+\cdots+n_\ell=t}\sum_{\substack{0\leq m_j\leq n_j \\ m_j\equiv n_j(2) \\ \forall1\leq j\leq\ell}}\frac{t!}{\ell! \ n_1!\cdots n_\ell!}\left(\prod_{j=1}^\ell c_{n_j,m_j}\cdot \mathcal{S}(n_j,m_j,\phi,\chi)\right)\\
\label{bigsum'-error}
&+ \ O\left(\frac{\log\log(3N)}{\log Q_{k,N}}\right).
\end{align}
Using our analysis of \eqref{bigsum'}, we determine the $n$\textsuperscript{th} centered moment of the one-level density. We do these separately for non-trivial and trivial $\chi$ in Sections \ref{subsec:5.1} and \ref{subsec:5.2}, respectively.

%
%

\subsection{Non-Trivial Twisting Character}\label{subsec:5.1}
Every term of \eqref{bigsum'} corresponds to a choice of $1 \leq \ell \leq t$; a choice of $(n_1,\dots,n_{\ell})$ such that $\sum_{j=1}^\ell n_j = t$; and a choice of $(m_1,\dots,m_\ell)$ such that $m_j\equiv n_j\pmod{2}$ and $0 \leq m_j \leq n_j$ for all $j$. In view of Lemma \ref{lemma:Nontrivial-Analysis}, we split our consideration of these terms into two cases: terms for which $(m_j,n_j)\neq(0,2)$ for some $j$, and the term for which $(m_j,n_j)=(0,2)$ for all $j$. 

For terms of the former type, consider the $j$ for which $(m_j,n_j)\neq(0,2)$. For this $j$, Lemma \ref{lemma:Nontrivial-Analysis} yields
\begin{equation}
\mathcal{S}(n_j,m_j,\phi,\chi) \ = \ O\left(\frac{\log\log(3N)}{\log Q_{k,N}}\right);
\end{equation}
and for all the other $j$,
\begin{align}
\mathcal{S}(n_j,m_j,\phi,\chi) \ = \ O(1).
\end{align}
Multiplying the order of magnitude over all $j$, each of these terms contributes
\begin{equation}\label{Nontrivial-Error}
\frac{t!}{\ell! \ n_1!\cdots n_\ell!}\left(\prod_{j=1}^\ell c_{n_j,m_j}\cdot \mathcal{S}(n_j,m_j,\phi,\chi)\right) \ = \ O\left(\frac{\log\log(3N)}{\log Q_{k,N}}\right).
\end{equation}
Since the number of terms of this type depends only on $t$, the total contribution is also $O\left(\frac{\log\log(3N)}{\log Q_{k,N}}\right)$, the implied constant depending on $t$, $r$, and $D$. 

As for the latter case, we consider the term for which $(m_j,n_j)=(0,2)$ for all $j$. This arises only when $t$ is even and $\ell=t/2$ because $\sum_{j=1}^\ell n_j=t$. It follows from \eqref{eq:hecke-comb-recursive} that $c_{n_j,m_j}=1$ for all $j$; and Lemma \ref{lemma:Nontrivial-Analysis} gives 
\begin{equation}
\mathcal{S}(n_j,m_j,\phi,\chi) \ = \ \frac{\sigma_\phi^2}{4}+O\left(\frac{\log\log(3N)}{\log Q_{k,N}}\right)
\end{equation}
for all $j$.
Therefore, the contribution of this term to \eqref{bigsum'} is
\begin{equation}\label{Nontrivial-Main}
\frac{t!}{\left(t/2\right)! \ 2^{\ell/2}}\cdot\left(\frac{\sigma_\phi^2}{4}\right)^{t/2}+O\left(\frac{\log\log(3N)}{\log Q_{k,N}}\right) \ = \ (t-1)!! \ \left(\frac{\sigma_\phi^2}{4}\right)^{t/2}+O\left(\frac{\log\log(3N)}{\log Q_{k,N}}\right).
\end{equation}

Adding the contributions of all these terms \eqref{Nontrivial-Error} and \eqref{Nontrivial-Main}, we have that 
\begin{align}
\nonumber
&\mathcal{A}^{w_{\chi,r}}_{\mathcal{F}_k(N)'}\left[\left(\sum\nolimits_{p\nmid N}\lambda_\cdot(p) \ \widehat{\phi}\left(\frac{\log p}{\log Q_{k,N}}\right)\frac{\log p}{\sqrt{p}\log Q_{k,N}}\right)^t\right] \\
\label{eq:Moment-Nontrivial}
&= \ (t-1)!! \ \left(\frac{\sigma_\phi^2}{4}\right)^{t/2}+O\left(\frac{\log\log(3N)}{\log Q_{k,N}}\right)
\end{align}
Finally, taking $t = n$ in \eqref{eq:Moment-Nontrivial}, we deduce that the $n$\textsuperscript{th} centered moment of the $w_{\chi_0,r}$-weighted one-level density of $\mathcal{F}_{k}(N)'$, originally formulated in \eqref{eq:1}, is
\begin{align}
& \mathcal{A}^{w_{\chi,r}}_{\mathcal{F}_k(N)'}\left[\left(-2\sum_{p\nmid N}\lambda_\cdot(p)\widehat{\phi}\left(\frac{\log p}{\log Q_{k,N}}\right)\frac{\log p}{\sqrt{p}\log Q_{k,N}}+O\left(\frac{\log\log(kN)}{\log Q_{k,N}}\right)\right)^n\right] \\
& = \  (-2)^n \mathcal{A}^{w_{\chi,r}}_{\mathcal{F}_k(N)'}\left[\left(\sum_{p\nmid N}\lambda_\cdot(p)\widehat{\phi}\left(\frac{\log p}{\log Q_{k,N}}\right)\frac{\log p}{\sqrt{p}\log Q_{k,N}}\right)^n\right] + O\left(\frac{\log\log(kN)}{\log Q_{k,N}}\right) \\
& = \ (-2)^n \ (n-1)!! \ \left(\frac{\sigma_\phi^2}{4}\right)^{n/2}+O\left(\frac{\log\log(kN)}{\log Q_{k,N}}\right) \\
& = \ (n-1)!! \ \sigma_\phi^n+O\left(\frac{\log\log(kN)}{\log Q_{k,N}}\right),
\end{align} 
proving Theorem \ref{thm:main-thm} in the case that $\chi$ is non-trivial. 

\subsection{Trivial Twisting Character}\label{subsec:5.2}
Although the analysis for trivial $\chi = \chi_0$, the combinatorial arguments made to synthesize the various term-wise contributions in this case are more unassuming. 

In view of Lemma \ref{lemma:Trivial-Analysis}, we split our consideration of the terms of \eqref{bigsum'} into two cases: terms for which $m_j+n_j\geq3$ for some $j$, and terms for which $m_j+n_j\leq2$ for all $j$. 

For terms of the the former type, consider the $j$ for which $n_j+m_j\geq 3$. For this $j$, Lemma \ref{lemma:Trivial-Analysis} yields 
\begin{equation}
    \mathcal{S}(n_j,m_j,\phi,\chi_0) \ = \ O\left(\frac{1}{\log^2 Q_{k,N}}\right);
\end{equation}
and for all the other $j$, 
\begin{align}
\mathcal{S}(n_j,m_j,\phi,\chi_0) \ = \ O(1).
\end{align}
Multiplying the order of magnitude over all $j$, we find that each of these terms contributes
\begin{equation}\label{Trivial-Error}
\frac{t!}{\ell! \ n_1!\cdots n_\ell!}\left(\prod_{j=1}^\ell c_{n_j,m_j}\cdot \mathcal{S}(n_j,m_j,\phi,\chi_0)\right) \ = \ O\left(\frac{\log\log(3N)}{\log Q_{k,N}}\right).
\end{equation}

Next, we consider the terms for which $m_j+n_j\leq2$ for all $j$. Since $m_j$ and $n_j$ have the same parity, the set of possibilities for $(m_j,n_j)$ is $\{(0,2),(1,1)\}$; specializing (\ref{eq:hecke-comb-recursive}) for these possibilities gives $c_{n_j,m_j}=1$ for all $j$. Based on Lemma \ref{lemma:Trivial-Analysis}, if $n_j=1$ it contributes $\phi(0)/2$ to the main term; if $n_j=2$ it contributes $\sigma_\phi^2/4$ to the main term. If the number of $j$'s with $n_j=2$ is $0\leq s\leq\lfloor t/2\rfloor$, then the number of $j$'s with $n_j=1$ is $t-2s$ because $\sum_j n_j=t$; this case occurs $\binom{t-s}{s}$ times. Tallying the contribution in all these cases, we get 
\begin{align}\label{Trivial-Main}
\nonumber
&\frac{t!}{\ell! \ n_1!\cdots n_\ell!}\left(\prod_{j=1}^\ell c_{n_j,m_j}\cdot \mathcal{S}(n_j,m_j,\phi,\chi_0)\right) \\
& = \ \sum_{s=0}^{\lfloor t/2\rfloor}\frac{t!}{2^s(t-s)!}\binom{t-s}{s}\left(\frac{\phi(0)}{2}\right)^{t-2s}\left(\frac{\sigma_\phi^2}{4}\right)^{s}.
\end{align} 

Overall, adding \eqref{Trivial-Error} and \eqref{Trivial-Main}, we have that 
\begin{align}
\nonumber
&\mathcal{A}^{w_{\chi_0,r}}_{\mathcal{F}_k(N)'}\left[\left(\sum\nolimits_{p\nmid N}\lambda_\cdot(p) \ \widehat{\phi}\left(\frac{\log p}{\log Q_{k,N}}\right)\frac{\log p}{\sqrt{p}\log Q_{k,N}}\right)^t\right] \\
\label{eq:Moment-Trivial}
&= \ \sum_{s=0}^{\lfloor t/2\rfloor}\frac{t!}{2^s(t-s)!}\binom{t-s}{s}\left(\frac{\phi(0)}{2}\right)^{t-2s}\left(\frac{\sigma_\phi^2}{4}\right)^{s}+O\left(\frac{\log\log(3N)}{\log Q_{k,N}}\right)
\end{align}
Using \eqref{eq:Moment-Trivial}, we now calculate the $n$\textsuperscript{th} centered moment of the $w_{\chi_0,r}$-weighted one-level density of $\mathcal{F}_{k}(N)'$, originally formulated in \eqref{eq:1}:
\begin{align}
\nonumber
&\mathcal{A}^{w_{\chi,r}}_{\mathcal{F}_k(N)'}\left[\left(\phi(0)-2\sum_{p\nmid N}\lambda_\cdot(p)\widehat{\phi}\left(\frac{\log p}{\log Q_{k,N}}\right)\frac{\log p}{\sqrt{p}\log Q_{k,N}}+O\left(\frac{\log\log(kN)}{\log Q_{k,N}}\right)\right)^n\right]\\
\nonumber
& = \ 
\sum_{t=0}^n\binom{n}{t}\phi(0)^{n-t}(-2)^t \ \mathcal{A}^{w_{\chi,r}}_{\mathcal{F}_k(N)'}\left[\left(\sum_{p\nmid N}\lambda_\cdot(p)\widehat{\phi}\left(\frac{\log p}{\log(k^2N)}\right)\frac{\log p}{\sqrt{p}\log Q_{k,N}}\right)^t\right] \\
\nonumber
& + \ O\left(\frac{\log\log(kN)}{\log Q_{k,N}}\right)\\
\nonumber
& = \ \sum_{t=0}^n\binom{n}{t}\phi(0)^{n-t}(-2)^t\sum_{s=0}^{\lfloor t/2\rfloor}\frac{t!}{2^s(t-s)!}\binom{t-s}{s}\left(\frac{\phi(0)}{2}\right)^{t-2s}\left(\frac{\sigma_\phi^2}{4}\right)^{s} \\
& + \ O\left(\frac{\log\log(kN)}{\log Q_{k,N}}\right).
\end{align}
We denote the main term above by
\begin{equation}
\mathfrak{C} \ := \ \sum_{t=0}^n\binom{n}{t}\phi(0)^{n-t}(-2)^t\sum_{s=0}^{\lfloor t/2\rfloor}\frac{t!}{2^s(t-s)!}\binom{t-s}{s}\left(\frac{\phi(0)}{2}\right)^{t-2s}\left(\frac{\sigma_\phi^2}{4}\right)^{s}.
\end{equation}
We unravel $\mathfrak{C}$ not with the generating series trick of Soshnikov \cite[Lemma 2]{Sos00}, which is characteristic in $n$-level calculations, but with an elementary combinatorial trick of our own.

Regrouping the factors in each term of $\mathfrak{C}$ so that the respective index is most prominent in its summation,
\begin{equation}
\mathfrak{C} \ = \ \phi(0)^n\sum_{t=0}^n\binom{n}{t}(-1)^t\sum_{s=0}^{\lfloor t/2\rfloor}\frac{t!}{(t-s)!}\binom{t-s}{s}\left(\frac{\sigma_\phi^2}{2\phi(0)^2}\right)^{s}.
\end{equation}
Seeing the various parts of the typical term in the inner summation, particularly $t!$, $(t-2s)!$, and the cancelable $(t-s)!$, we expect to rewrite it in the form $\binom{t}{2s}(\cdot)^{2s}$ to subsequently use the binomial theorem, the preferred tool for this type of problem:
\begin{align}
\frac{t!}{(t-s)!}\binom{t-s}{s}\left(\frac{\sigma_\phi^2}{2\phi(0)^2}\right)^{s} \ 
&= \ \binom{t}{2s}\frac{(2s)!}{s!}\left(\frac{\sigma_\phi^2}{2\phi(0)^2}\right)^{s}.
\end{align}
Having obtained the desired binomial coefficient, we attend to the other factors in the term:
\begin{align}
\frac{t!}{(t-s)!}\binom{t-s}{s}\left(\frac{\sigma_\phi^2}{2\phi(0)^2}\right)^{s} \ 
&= \ \binom{t}{2s}(2s-1)!!\left(\frac{\sigma_\phi^2}{\phi(0)^2}\right)^{s}.
\end{align}
To write the term in the desired form, we let $X$ be a Gaussian random variable with mean $0$ and variance $\sigma_\phi^2/\phi(0)^2$. The $m$\textsuperscript{th} centered moment of $X$ is $0$ if $m$ is odd, and $(m-1)!!(\sigma_\phi^2/\phi(0)^2)^{m/2}$ if $m$ is even. Therefore,
\begin{align}
\frac{t!}{(t-s)!}\binom{t-s}{s}\left(\frac{\sigma_\phi^2}{2\phi(0)^2}\right)^{s} 
&= \ \binom{t}{2s}\mathbb{E}\left[X^{2s}\right].
\end{align}

Having tailored the problem to the binomial theorem, we use it to full avail, twice:
\begin{align}
\nonumber
\mathfrak{C} \ &= \ \phi(0)^n\sum_{t=0}^n\binom{n}{t}(-1)^t\sum_{s=0}^{\lfloor t/2\rfloor}\frac{t!}{(t-s)!}\binom{t-s}{s}\left(\frac{\sigma_\phi^2}{2\phi(0)^2}\right)^{s}\\
\nonumber
&= \ \phi(0)^n\sum_{t=0}^n\binom{n}{t}(-1)^t\sum_{s=0}^{\lfloor t/2\rfloor}\binom{t}{2s}\mathbb{E}\left[X^{2s}\right]\\
\nonumber
&= \ \phi(0)^n\sum_{t=0}^n\binom{n}{t}(-1)^t\sum_{r=0}^{t}\binom{t}{r}\mathbb{E}\left[X^{r}\right]\\
\nonumber
&= \ \phi(0)^n\sum_{t=0}^n\binom{n}{t}(-1)^t \ \mathbb{E}\left[(1+X)^t\right]\\
\nonumber
&= \ \phi(0)^n \ \mathbb{E}\left[(-X)^n\right]\\
&= \ \begin{cases}
0 & n \text{ odd}, \\
\phi(0)^{n}(n-1)!!\left(\frac{\sigma_\phi^2}{\phi(0)^2}\right)^{n/2}=(n-1)!! \ \sigma_\phi^{n}& n\text{ even},
\end{cases}
\end{align}
which is precisely the $n$\textsuperscript{th} centered moment of a Gaussian distribution with variance $\sigma_\phi^2$, completing the proof of Theorem \ref{thm:main-thm} in the case of $\chi=\chi_0$ trivial too. 

\appendix

%
%

\section{Switching Sums and Products}\label{app:a}
Switching the following sum and product is what allows us to carry out the combinatorial analysis in Section \ref{sec:5}. We present this crucial, albeit cumbersome, step here. For clarity of the mainly combinatorial aspects of this step, we adopt the notation: 
\begin{align}
T(q,n,m,\chi)\ := \ \widehat{\phi}\left(\frac{\log q}{\log Q_{k,N}}\right)^{n}\frac{\chi(q)^{m}\log^{n} q}{q^{(n+m)/2}\log^{n} Q_{k,N}} \ S_r\left(q^{m}\right).
\end{align} Furthermore, we abbreviate ``not necessarily distinct'' as ``n.n.d.''. 
\begin{claim} 
For all $1\leq j\leq \ell$, let $m_j$ and $n_j$ be non-negative integers such that $m_j\leq n_j$ and $m_j\equiv n_j\pmod{2}$. Then 
\begin{align}\nonumber
\sum_{\substack{(q_1,\dots,q_\ell) \\ \text{distinct;} \\ q_j\nmid N}}\prod_{j=1}^\ell T(q_j,n_j,m_j,\chi) \ = \ \prod_{j=1}^\ell\left(\sum_{q_j\nmid N}T(q_j,n_j,m_j,\chi)\right) + O\left(\frac{1}{\log^2 Q_{k,N}}\right).
\end{align}
\end{claim}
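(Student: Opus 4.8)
The plan is to view the right-hand side as the full sum over \emph{all} $\ell$-tuples of primes $(q_1,\dots,q_\ell)$ with each $q_j\nmid N$, split off the sub-sum over pairwise-distinct tuples---which is exactly the left-hand side---and bound the remaining ``diagonal'' contribution (tuples in which at least two coordinates coincide) by $O(1/\log^2 Q_{k,N})$. Expanding the product gives $\prod_{j=1}^{\ell}(\sum_{q_j\nmid N}T(q_j,n_j,m_j,\chi)) = \sum_{(q_1,\dots,q_\ell)}\prod_{j=1}^{\ell}T(q_j,n_j,m_j,\chi)$; subtracting the distinct-tuple sum leaves precisely the diagonal piece, and since every tuple appearing there has at least one coincident pair $q_i=q_{i'}$, a union bound yields $|\text{diagonal}| \le \sum_{1\le i<i'\le\ell}\sum_{(q_1,\dots,q_\ell):\,q_i=q_{i'}}\prod_{j=1}^{\ell}|T(q_j,n_j,m_j,\chi)|$. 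If $\ell=1$ the diagonal is empty and there is nothing to prove, so assume $\ell\ge2$; note also that in every application of the Claim one has $n_1+\cdots+n_\ell=t$ with $\ell$ nonzero parts, so each $n_j\ge1$, which is used below.

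The two ingredients are a pointwise bound and a single-variable bound on $T$. Using $|\chi(q)|\le1$, $\sigma_1(\gcd(r,q^{m_j}))\le\sigma_1(r)=O_r(1)$, and the boundedness and compact support (say $\text{supp}(\widehat\phi)\subset(-\alpha,\alpha)$) of $\widehat\phi$, one has $|T(q,n_j,m_j,\chi)|\ll_{n,r}\log^{n_j}q\cdot q^{-(n_j+m_j)/2}\log^{-n_j}Q_{k,N}$ for $q\ll Q_{k,N}^{\alpha}$ and $0$ otherwise. Consequently: (a) for a single index, $\sum_{q\nmid N}|T(q,n_j,m_j,\chi)|\ll_{n,r}\log^{-n_j}Q_{k,N}\sum_{q\ll Q_{k,N}^{\alpha}}\log^{n_j}q\cdot q^{-(n_j+m_j)/2}=O(1)$ --- in the borderline cases $n_j+m_j=2$ this follows from Mertens' theorem and partial summation (the normalization by $\log^{n_j}Q_{k,N}$ is exactly what makes this $O(1)$ rather than a power of $\log Q_{k,N}$), and for $n_j+m_j\ge3$ the prime sum already converges absolutely; (b) for a coincident pair, since $n_j\ge1$ together with $n_j\equiv m_j\pmod{2}$ forces $n_j+m_j\ge2$, the merged term has $q$ raised to exponent $(n_i+m_i)/2+(n_{i'}+m_{i'})/2\ge2$ in the denominator, whence $\sum_{q\nmid N}|T(q,n_i,m_i,\chi)|\,|T(q,n_{i'},m_{i'},\chi)|\ll_{n,r}\log^{-(n_i+n_{i'})}Q_{k,N}\sum_{q}\log^{n_i+n_{i'}}q\cdot q^{-2}\ll_{n}\log^{-2}Q_{k,N}$, using $\sum_q\log^{a}q\cdot q^{-2}<\infty$ and $n_i+n_{i'}\ge2$.

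Assembling: for each pair $i<i'$ I factor the sum over tuples with $q_i=q_{i'}$ as the merged sum of (b) over the common prime times $\prod_{k\ne i,i'}\sum_{q_k\nmid N}|T(q_k,n_k,m_k,\chi)|$, which by (a) is $O(1)^{\ell-2}$; hence each pair contributes $\ll_{n,r}\log^{-2}Q_{k,N}$, and summing over the $\binom{\ell}{2}\le\binom{n}{2}$ pairs gives the stated $O(1/\log^2 Q_{k,N})$. The only genuinely delicate points are the borderline single-variable estimates in (a), where one must use Mertens-type asymptotics against the $\log^{n_j}Q_{k,N}$ normalization rather than a crude bound, and the elementary but load-bearing inequality $n_j+m_j\ge2$ in (b) that pushes the diagonal power of $q$ up to at least $2$; beyond that there is no deeper obstacle, the statement being the bookkeeping lemma that legitimizes reversing the passage from $\ell$-tuples of primes to individual primes carried out in~(\ref{eq:2}).
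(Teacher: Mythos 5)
Your proof is correct and takes essentially the same route as the paper: decompose the product of sums into the distinct-tuple sum plus a diagonal contribution, union-bound the diagonal over coincident index pairs, factor each pair's term into a merged two-factor prime sum times single-factor sums, and bound the latter by $O(1)$ and the former by $O(1/\log^2 Q_{k,N})$. Your explicit observation that the argument requires each $n_j\ge 1$ (forced in all applications by $n_1+\cdots+n_\ell=t$ with $\ell$ nonzero parts, so that $n_{j_i}+m_{j_i}\ge 2$) correctly flags a hypothesis the paper's stated claim allows to fail but whose proof tacitly uses.
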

\begin{proof}
We would like to prove that
\begin{align}
\nonumber& \prod_{j=1}^\ell\left(\sum_{q_j\nmid N}T(q_j,n_j,m_j,\chi)\right) - \sum_{\substack{(q_1,\dots,q_\ell) \\ \text{distinct;} \\ q_j\nmid N}}\prod_{j=1}^\ell T(q_j,n_j,m_j,\chi) \\
\nonumber= \ & \sum_{\substack{(q_1,\dots,q_\ell) \\ \text{n.n.d.}; \\ q_j\nmid N}}\prod_{j=1}^\ell T(q_j,n_j,m_j,\chi) - \sum_{\substack{(q_1,\dots,q_\ell) \\ \text{distinct;} \\ q_j\nmid N}}\prod_{j=1}^\ell T(q_j,n_j,m_j,\chi) \\
\label{eq: Error 0.1}
= \ & \sum_{\substack{(q_1,\dots,q_\ell) \\ \text{n.n.d.}; \\ q_j\nmid N; \\ \exists j_1<j_2: \ q_{j_1}=q_{j_2}}}\left(T(q_{j_1},n_{j_1},m_{j_1},\chi)\cdot T(q_{j_2},n_{j_2},m_{j_2},\chi)\cdot\prod_{\substack{1\leq j\leq\ell; \\ j\neq j_1, j_2}} T(q_j,n_j,m_j,\chi)\right)
\end{align} 
has order at most $\log^{-2}Q_{k,N}$. By the principle of inclusion-exclusion, we know that
\begin{align}
\nonumber& \sum_{\substack{(q_1,\dots,q_\ell) \\ \text{n.n.d}; \\ q_j\nmid N; \\ \exists j_1<j_2: \ q_{j_1}=q_{j_2}}}\left(T(q_{j_1},n_{j_1},m_{j_1},\chi)\cdot T(q_{j_2},n_{j_2},m_{j_2},\chi)\cdot\prod_{\substack{1\leq j\leq\ell; \\ j\neq j_1, j_2}} T(q_j,n_j,m_j,\chi)\right)\\
\label{eq:Inclusion-Exclusion}
\leq \ & \sum_{\substack{1\leq j_2\leq \ell;\\1\leq j_1<j_2}}\sum_{\substack{(q_1,\dots,q_{j_2-1},\\ q_{j_2+1},\dots,q_\ell) \\ \text{n.n.d.}; \\ q_j\nmid N}}\left(T(q_{j_1},n_{j_1},m_{j_1},\chi)\cdot T(q_{j_1},n_{j_2},m_{j_2},\chi)\cdot\prod_{\substack{1\leq j\leq\ell, \\ j\neq j_1, j_2}} T(q_j,n_j,m_j,\chi)\right).
\end{align} 
In fact, for $\ell\geq3$, the above inequality is strict because we double-count several terms. For one, consider $q_1=q_2=q_3$ and $q_3,\ldots,q_\ell$ distinct; the corresponding term $\prod_{j}T(q_j,n_j,m_j,\chi)$ is counted three times in (\ref{eq:Inclusion-Exclusion}) when $(j_1,j_2)=(1,2),(1,3),$ and $(2,3)$. There are many other examples of double-counted terms. 

Since $q_1,\dots,q_{j_2-1},q_{j_2+1},\dots,q_\ell$ are n.n.d.\ in (\ref{eq:Inclusion-Exclusion}), we can freely swap the sum and product:
\begin{align}
& \sum_{\substack{1\leq j_2\leq \ell;\\1\leq j_1<j_2}}\sum_{\substack{(q_1,\dots,q_{j_2-1},\\ q_{j_2+1},\dots,q_\ell) \\ \text{n.n.d.}; \\ q_j\nmid N}}\left(T(q_{j_1},n_{j_1},m_{j_1},\chi)\cdot T(q_{j_1},n_{j_2},m_{j_2})\cdot\prod_{\substack{1\leq j\leq\ell, \\ j\neq j_1, j_2}} T(q_j,n_j,m_j,\chi)\right) \\
\label{eq:PIE-Analysis}
= \ & \sum_{\substack{1\leq j_2\leq \ell;\\1\leq j_1<j_2}}\left[\left(\sum_{q_{j_1}\nmid N}T(q_{j_1},n_{j_1},m_{j_1},\chi)\cdot T(q_{j_1},n_{j_2},m_{j_2},\chi)\right)\cdot\prod_{\substack{1\leq j\leq\ell \\ j\neq j_1,j_2}}\left(\sum_{q_j\nmid N}T(q_j,n_j,m_j,\chi)\right)\right].
\end{align}

From Lemmas \ref{lemma:Nontrivial-Analysis} and \ref{lemma:Trivial-Analysis}, we know that $\sum T(q_j,n_j,m_j,\chi)=O(1)$ for all $j\neq j_1,j_2$. Furthermore, noting that $m_{j_i}+n_{j_i}\geq2$ for both $i=1,2$, we get that
\begin{align}
\nonumber\sum_{q_{j_1}\nmid N}T(q_{j_1},n_{j_1},m_{j_1},\chi)\cdot T(q_{j_1},n_{j_2},m_{j_2},\chi) \ \ll_r & \ \sum_{q_{j_1}\nmid N}\frac{\log^{n_{j_1}+n_{j_2}} q}{q^{(m_{j_1}+n_{j_1}+m_{j_2}+n_{j_2})/2}\log^{n_{j_1}+n_{j_2}} Q_{k,N}}\\
= & \ O\left(\frac{1}{\log^2 Q_{k,N}}\right).
\end{align}
Absorbing the outer sum into the constant (as $\ell\leq t\leq n$) and multiplying the orders of magnitude of all these factors, we find that the total order of magnitude of (\ref{eq:PIE-Analysis}) and hence is at most $\log^{-2}Q_{k,N}$, as claimed. 
\end{proof}

\section*{References}

\printbibliography[heading=none]

\end{document}